\numberwithin{equation}{section}
\newcommand{\Div}{\divergence}
\newcommand{\R}{\mathbb R}
\newcommand{\dd}{\,\mathrm{d}}
\newcommand{\ds}{\dd s}
\newcommand{\dt}{\dd t}
\newcommand{\dx}{\dd x}
\providecommand{\seminormtmp}[2]{{#1[{#2}#1]}}
\providecommand{\seminorm}[1]{\seminormtmp{}{#1}}
\newcommand{\com}[1]{\color{black}#1\color{black}}
\begin{document}

\title{On temporal regularity for strong solutions to stochastic $p$-laplace systems}

\author[J. Wichmann]{J\"{o}rn Wichmann}
\address{Department of Mathematics, University of Bielefeld,
  \\
  Postfach 10 01 31, 33501 Bielefeld, Germany}
\email{jwichmann@math.uni-bielefeld.de}
\thanks{jwichmann@math.uni-bielefeld.de\\
The research was funded by the Deutsche Forschungsgemeinschaft (DFG, German Research Foundation) – SFB 1283/2 2021 – 317210226.
}
%
%

\begin{abstract}
  { In this article we investigate the temporal regularity of strong solutions to the stochastic $p$-\com{L}aplace system in the degenerate setting, $p \in [2,\infty)$, driven by a multiplicative nonlinear stochastic forcing. We establish $1/2$ time differentiability in an expontential Besov-Orlicz space for the solution process $u$. Furthermore, we prove $1/2$ time differentiability of the nonlinear gradient $\abs{\nabla u}^\frac{p-2}{2} \nabla u$ in a Nikolskii space.
}
\vspace{2mm}\\
  {Keywords:
     SPDEs, Nonlinear Laplace-type systems, Strong solutions, Regularity, Stochastic p-heat equation}
  {MSC: 
   35K55, 
   35K65, 
   35R60, 
   	35D35, 
   	35B65 
	60H15 
}

\end{abstract}

\maketitle

\section{Introduction}
Let $\mathcal{O} \subset \R^n$ be a bounded domain, $n$, $N \in \mathbb{N}$ \com{and } $T > 0$ be finite. We are interested in the time and spatial regularity of the solution process $u$ to the stochastic $p$-Laplace system. Given an initial datum $u_0$ and a stochastic forcing term $(G,W)$ (for the precise assumptions see Assumption \ref{ass:Noise}), $u$ is determined by the relations 
\begin{alignat}{2} \label{intro:p-Laplace-stoch}
\begin{aligned}
\dd u - \Div S(\nabla u) \dt  &= G(u) \dd W \quad &&\text{ in } (0,T) \times \mathcal{O}, \\
u &= 0 \quad &&\text{ on } (0,T) \times \partial \mathcal{O}, \\
u(0) &= u_0 &&\text{ on } \mathcal{O},
\end{aligned}
\end{alignat}
where $S(\xi):= \left( \kappa + \abs{\xi} \right)^{p-2} \xi$\com{, $\xi \in \R^{n \times N}$}, $p \in [2,\infty)$ and $\kappa \geq 0$. 

\com{
In applications, the time and space regularity of solutions to \eqref{intro:p-Laplace-stoch} are of great importance. When designing numerical algorithms, the regularity determines the rate of convergence of the scheme. In \cite{MR4286257} we  construct an algorithm for the determinstic $p$-Laplace system, that is able to approximate rough solutions. There one sees the delicate interplay between regularity of the solution and the rate of convergence of the algorithm. }

The existence of analytically weak solutions to \eqref{intro:p-Laplace-stoch} in the space 
\begin{align*}
L^2(\Omega; C([0,T]; L^2(\mathcal{O}))) \cap L^p(\Omega;L^p((0,T);W^{1,p}_0(\mathcal{O})))
\end{align*}
can be established by standard monotonicity arguments \cite{LiRo}. In the deterministic setting it is natural to obtain regularity estimates for strong solutions in the spaces
\begin{subequations}
\label{intro:instat}
\begin{align}
V(\nabla u) &\in L^2(0,T;W^{1,2}(\mathcal{O})) \cap W^{1,2}(0,T;L^2(\mathcal{O})), \label{intro:V-instat} \\
u &\in  L^\infty(0,T;W^{1,2}(\mathcal{O})) \cap C^{0,1}([0,T],L^2(\mathcal{O})), \label{intro:u-instat}
\end{align}
\end{subequations}
where $V(\xi):= \left(\kappa + \abs{\xi} \right)^{\frac{p-2}{2}} \xi$. For this we test the system formally with $-\Delta u$ and $\partial_t^2 u$. It can be made rigorous by a substitution of differentials by difference quotients, cf. \cite{BreMen18pre}. In the stochastic case, it is still possible to prove local spatial regularity, i.e.
\begin{subequations} \label{intro:loc}
\begin{align}
u &\in L^2(\Omega; L^\infty(0,T;W^{1,2}_{\text{loc}}(\mathcal{O})), \\
V(\nabla u) &\in L^2(\Omega \times (0,T); W^{1,2}_{\text{loc}}(\mathcal{O})),
\end{align}
\end{subequations}
using difference quotients, cf. \cite{Breit2015Regularity}. However, it is not clear whether the estimat\com{e } holds up to the boundary under appropiate assumptions on the geometry of $\mathcal{O}$. 

Another method to construct strong solutions is presented in \cite{Gess2012Strong}. There the author proves global spatial regularity using a Galerkin ansatz and a special projection operator associated to the energy of the system. He obtains uniform estimates in the space
\begin{subequations} \label{intro:div}
\begin{align}
u &\in L^\infty(0,T; L^p(\Omega;  W^{1,p}(\mathcal{O}))), \\
\Div S(\nabla u) &\in L^2((0,T)\times \Omega;L^2(\mathcal{O}))).
\end{align}
\end{subequations}
The substantial difference between the estimates \eqref{intro:loc} and \eqref{intro:div} is, that the former corresponds to a formal testing of the equation with $-\Delta u$ whereas the latter corresponds to testing with $-\Div S(\nabla u)$. The second approach fits naturally to the gradient flow structure of the $p$-Laplace system \eqref{intro:p-Laplace-stoch}.

For sufficient\com{ly } regular or convex domains it is possible to extend the regularity estimate on $\Div S(\nabla u)$ to $\nabla S(\nabla u)$ as presented in \cite{Balci2021}, \cite{MR4030249} and \cite{CiaMaz20}. This allows to establish, at least in the non-degenerate setting $\kappa > 0$, global gradient regularity for the nonlinear expression $V(\nabla u)$, cf. Corollary \ref{cor:nabla-V-reg}.

In contrast to the spatial regularity, where deterministic tools can be applied, we need different techniques in order to investigate the time regularity of strong solutions. In the stochastic setting, the time regularity of the deterministic system \eqref{intro:instat} is unreachable, due to the irregularity of the cylindrical Wiener process $W$. Even in the linear case, $p=2$, the time regularity is substantially lower compared to the deterministic system. 
The maximal regularity for the stochastic heat equation with an additive forcing has been established in \cite{MR2952092} (see also \cite{MR2241096}). They prove
\begin{align}
\mathbb{E} \left[ \norm{u}_{H^{\alpha,q}(0,T;L^2(\mathcal{O})}^q \right] \lesssim \mathbb{E} \left[ \norm{G}_{L^q(0,T;L^2(\mathcal{O})}^q \right],
\end{align}
where $\alpha \in [0,1/2)$, $q \in (2,\infty)$ and $H$ denotes a Bessel potential space. The authors use the concept of mild solutions and estimate a stochastic convolution operator. 

In the pure nonlinear setting, the approach based on mild solutions does not fit anymore. In \cite{BH16_Monotone} the authors conjectured, that it is possible to establish 
\begin{subequations} \label{intro:Walpha-2}
\begin{align}
u &\in L^2(\Omega;C^{\alpha}(0,T;L^2(\mathcal{O}))), \\
V(\nabla u) &\in L^2(\Omega; W^{\alpha,2}(0,T;L^2(\mathcal{O}))),
\end{align}  
\end{subequations}
$\alpha  \in [0,1/2)$. In this paper, we do not only verify \eqref{intro:Walpha-2}, but \com{in addition } improve the result to a stronger scale of spaces (cf. Theorem \ref{thm:Besov-reg} and Theorem \ref{thm:time-reg-sol})
\begin{subequations}
\label{intro:instat-stoch}
\begin{align}
u &\in  L^2(\Omega; B^{1/2}_{\Phi_2,\infty}(0,T;L^2(\mathcal{O}))), \label{intro:u-instat-stoch}\\
V(\nabla u) &\in L^2(\Omega;B_{2,\infty}^{1/2}(0,T;L^2(\mathcal{O}))), \label{intro:V-instat-stoch}
\end{align}
\end{subequations}
where $B$ denotes a Besov space (for more details see Section \ref{sec:Function spaces}) and $\Phi_2(t) = e^{t^2} - 1$. \com{The time regularity \eqref{intro:u-instat-stoch} is optimal, in the sense that it exactly matches the time regularity of the underlying Wiener process $W$ as presented in \cite{VerHyt08}. } 

A key ingredient is the stability of the stochastic integral generated by a cylindrical Wiener process $W$ in type $2$ Banach spaces as explained in \cite{MR4116708}. The gradient regularity \eqref{intro:V-instat-stoch} heavily relies on the $V$-coercivity,
\begin{align}
\abs{V(\xi_1) - V(\xi_2)}^2 \eqsim \left( S(\xi_1) - S(\xi_2) \right) : (\xi_1 - \xi_2),
\end{align}
for all $\xi_1,\xi_2 \in \R^{n \times N}$ and the boundary condition of the nonlinear operator $G$. Furthermore, we show the improved time regularity
\begin{align}
V(\nabla u) \in L^2(\Omega; B_{q,\infty}^{1/2}(0,T;L^2(\mathcal{O})))
\end{align}
for $q > 2$, if the diffusion operator has improved time integrability, i.e. $\Div S(\nabla u) \in L^2(\Omega; L^q(0,T; L^2(\mathcal{O})))$ (see Theorem \ref{thm:higher-reg}).

In the past many authors have studied variants of \eqref{intro:p-Laplace-stoch} under different perspectives. The literature on the numerical analysis of the deterministic system is rich  \cite{BaLi1,Wei1992,BaLi2,EbLi,DER,BarDieNoc20}.

 The corresponding numerical analysis of the stochastic system is not that well developed. First results of fully implementable approximations have been discussed in \cite{MR2139212,MR2465711}. Just recently, an algorithm for the stochastic system has been proposed in \cite{MR4298537}. They infer convergence of the algorithm under the assumption \eqref{intro:Walpha-2} and the global spatial regularity assumption
\begin{subequations}
\begin{align}
u &\in L^2(\Omega; L^\infty(0,T;W^{1,2}(\mathcal{O}))), \\
V(\nabla u) & \in L^2(\Omega; L^2(0,T;W^{1,2}(\mathcal{O}))).
\end{align}
\end{subequations}

Well-posedness \com{of \eqref{intro:p-Laplace-stoch} } with merely $L^1$-initial data has been addressed in \cite{MR4225916}. The more general system, where $p$ is allowed to depend on $(\omega,t,x)$ respectively on $(t,x)$, is considered in \cite{MR3585706}, \cite{MR3535765} respectively in \cite{MR3327516}. The singular case $p \in [1,2)$ has been analyzed in \cite{MR2561270}, \cite{MR3580815} and \cite{MR4261330}.

The paper is organized as follows. Section~\ref{sec:math_setup} introduces the mathematical setup and preliminary results on stochastic integrals and strong solutions. 
 Time regularity for strong solutions is addressed in Section~\ref{sec:Time_reg_Main}. It starts with stability results for stochastic integrals in Section~\ref{sec:Stability_of_stochastic_integrals}. In Section~\ref{sec:Besov_regularity} \com{ we prove Theorem~\ref{thm:Besov-reg} on } Besov regularity of strong solution. Section~\ref{sec:Nikolskii_regularity} is about regularity of the nonlinear gradient $V(\nabla u)$ \com{as presented in Theorem~\ref{thm:time-reg-sol}}. Lastly, \com{the improved estimates on the nonlinear gradient are discussed in Theorem~\ref{thm:higher-reg} in } Section~\ref{sec:Higher_order_Nikolskii}.

\section{Mathematical setup} \label{sec:math_setup}
In this section we introduce the setup for the system \eqref{intro:p-Laplace-stoch}. Let $\mathcal{O} \subset \R^n$, $n \geq 2$, be a bounded domain (further assumptions on $\mathcal{O}$ will be needed for the spatial regularity of solutions). For some given $T>0$ we denote by $I := \com{[}0,T\com{]}$ the time interval and write $\mathcal{O}_T := I \times \mathcal{O}$ for the time space cylinder. Moreover let $\left(\Omega,\mathcal{F}, (\mathcal{F}_t)_{t\in I}, \mathbb{P} \right)$ denote a stochastic basis, i.e. a probability space with a complete and right continuous filtration $(\mathcal{F}_t)_{t\in I}$. We write $f \lesssim g$ for two non-negative quantities $f$ and $g$ if $f$ is bounded by $g$ up to a multiplicative constant. Accordingly we define $\gtrsim$ and $\eqsim$. We denote by $c$ a generic constant which can change its value from line to line.
\subsection{Function spaces} \label{sec:Function spaces}
As usual\com{, } $L^q(\mathcal{O})$ denotes the Lebesgue space and $W^{1,q}(\mathcal{O})$ the Sobolev space, where $1\leq q \leq \infty$. We denote by $W^{1,q}_0(\mathcal{O})$ the Sobolev spaces with zero boundary values. It is the closure of $C^\infty_0(\mathcal{O})$ (smooth functions with compact support) in the $W^{1,q}(\mathcal{O})$-norm. We denote by $W^{-1,q'}(\mathcal{O})$ the dual of $W^{1,q}_0(\mathcal{O})$. We do not distinguish in the notation between vector- and matrix-valued functions.

For a Banach space $\left(X, \norm{\cdot}_X \right)$ let $L^q(I;X)$ be the Bochner space of Bochner-measurable functions $u: I \to X$ satisfying $t \mapsto \norm{u(t)}_X \in L^q(I)$. Moreover, $C(\overline{I};X)$ is the space of continuous functions with respect to the norm-topology. We also use $C^{\alpha}(\overline{I};X)$ for the space of H\"older continuous functions. Given an Orlicz-function $\Phi: [0,\infty] \to [0,\infty]$, i.e. a convex function satisfying $ \lim_{t \to 0} \Phi(t)/t = 0$ and $\lim_{t \to \infty} \Phi(t)/t = \infty$ we define the Luxemburg-norm 
\begin{align*}
\norm{u}_{L^\Phi(I;X)} := \inf \left\{ \lambda > 0 : \int_I \Phi \left( \frac{\norm{u}_X}{\lambda} \right) \ds \leq 1 \right\}.
\end{align*}
The Orlicz space $L^\Phi(I;X)$ is the space of all Bochner-measurable functions with finite Luxemburg-norm. \com{For more details on Orlicz-spaces we refer to \cite{DiHaHaRu}. Given }$h \in I$ and $u :I \to X$ we define the difference operator $\tau_h: \set{u: I \to X} \to \set{u: I\cap I - \set{h} \to X} $ via $\tau_h(u) (s) := u(s+h) - u(s)$. The Besov-Orlicz space $B^\alpha_{\Phi,r}(I;X)$ with differentiability $\alpha \in (0,1)$, integrability $\Phi$ and fine index $r \in (1,\infty]$ is defined as the space of Bochner-measurable functions with finite Besov-Orlicz norm $\norm{\cdot}_{B^\alpha_{\Phi,r}(I;X)}$, where
\begin{align*}
\norm{u}_{B^\alpha_{\Phi,r}(I;X)} &:= \norm{u}_{L^{\Phi}(I ;X)} + \seminorm{u}_{B^\alpha_{\Phi,r}(I;X)}, \\
\seminorm{u}_{B^\alpha_{\Phi,r}(I;X)} &:= \left( \int_{I} h^{-r\alpha} \norm{\tau_h u}_{L^\Phi(I\cap I - \set{h};X)}^r \dd h \right)^\frac{1}{r}.
\end{align*} 
In the case $r = \infty$ the integral in $h$ is replaced by an essential supremum and the space is commonly called Nikolskii-Orlicz space. When $\Phi(t) = t^p$ for some $p \in (1,\infty)$ we call the space $B^\alpha_{\Phi,r}(I;X) =B^\alpha_{p,r}(I;X)$ Besov space. Similarly, given a Banach space $\left(Y, \norm{\cdot}_Y \right)$, we define $L^q(\Omega;Y)$ as the Bochner space of Bochner-measurable functions $u: \Omega \to Y$ satisfying $\omega \mapsto \norm{u(\omega)}_Y \in L^q(\Omega)$. The space $L^q_{\mathcal{F}}(\Omega \times I;X)$ denotes the subspace of $X$-valued progressively measurable processes. Let $\left(U,\norm{\cdot}_U \right)$ be a separable Hilbert space. $L_2(U;L^2(\mathcal{O}))$ denotes the space of Hilbert-Schmidt operators from $U$ to $L^2(\mathcal{O})$ with the norm $\norm{z}_{L_2(U;L^2_x)}^2:= \sum_{j\in \mathbb{N}} \norm{z(u_j)}_{L^2(\mathcal{O})}^2 $ where $\set{u_j}_{j \in \mathbb{N}}$ is some orthonormal basis of $U$. We abbreviate the notation $L^q_\omega L^q_t L^q_x := L^q(\Omega;L^q(I;L^q(\mathcal{O}))) $ and $L^{q-} := \bigcap_{r< q} L^r$.

\subsection{Stochastic integrals} \label{sec:stoch_integral}
In order to construct the stochastic forcing term, we impose the following conditions:
\begin{assumption}\label{ass:Noise}
\begin{enumerate}
\item We assume that $W$ is an $U$-valued cylindrical Wiener process with respect to $(\mathcal{F}_t)_{t\in I}$. Formally $W$ can be represented as
\begin{align} \label{rep:W}
W = \sum_{j \in \mathbb{N}} u_j \beta^j,
\end{align}
where $\set{\beta^j}_{j\in \mathbb{N}}$ are independent $1$-dimensional standard Brownian motions.
\item \label{ass:cond2} Let $v \in L^2_{\mathcal{F}}(\Omega \times I; L^2_x)$. We assume that $G(v)(\cdot): U \to L_{\mathcal{F}}^2(\Omega \times I; L^2_x)$ is given by 
\begin{align*}
u \mapsto G(v)(u) := \sum_{j \in \mathbb{N}} g_j(\cdot,v)( u_j, u)_U ,
\end{align*}
where $\set{g_j}_{j\in \mathbb{N}} \in C^1(\mathcal{O} \times \R^N; \R^N)$ with
\begin{enumerate}[label=(\roman*)]
\item (sublinear growth) for all $x \in \mathcal{O}$ and $\xi \in \R^N$ it holds
\begin{align}\label{ass:growth}
\sum_{j\in \mathbb{N}} \abs{g_j(x,\xi)}^2 + \abs{\nabla_x g_j(x,\xi)}^2 \leq c_{\text{growth}}(1+\abs{\xi}^2),
\end{align} 
\item (Lipschitz \com{continuity}) for all $x \in \mathcal{O}$ and $\xi \in \R^N$ it holds
\begin{align} \label{ass:Lipschitz}
\sum_{j\in \mathbb{N}} \abs{\nabla_{\xi} g_j(x,\xi)}^2 \leq c_{\text{lip}}.
\end{align} 
\item (boundary data) for all $x\in \partial \mathcal{O}$, $\xi \in \R^N$ and $j \in \mathbb{N}$ it holds $g_j(x,\xi) = 0$. \label{ass:boundary-noise}
\end{enumerate}
\end{enumerate}
\end{assumption}
Commonly the stochastic integral is constructed under weaker assumptions on the coefficient $G$, e.g. $g_j \in C(\mathcal{O} \times \R^N; \R^N)$ with sublinear growth and a direct Lipschitz assumption. Then the operator  
\begin{align*}
G : L^2_{\mathcal{F}}(\Omega \times I; L^2_x) \to L^2_{\mathcal{F}}(\Omega \times I; L_2(U;L^2_x))
\end{align*}
is bounded and continuous. However we are interested in strong solutions and thus also need regularity \com{of } the gradient. For this we use the more involved stochastic integration theory in \com{type 2 } Banach spaces as done by Brze\'{z}niak \cite{MR1313905} and Van Neerven and Weis in \cite{MR2109586}. Let $\left(E,\norm{\cdot}_E \right)$ be a Banach space, $\set{\gamma_j}_{j\in \mathbb{N}} \sim \mathcal{N}(0,1)$ \com{independent and identically distributed } random variables on some probability space $\left(\Omega_\gamma, \mathcal{F}_\gamma,\mathbb{P}_\gamma \right)$ and $F: U \to E$ a linear operator. We define the norm
\begin{align}
\norm{F}_{\gamma(U;E)}^2:= \mathbb{E}_{\gamma} \left[ \norm{\sum_{j \in \mathbb{N}} \gamma_j F(u_j) }_E^2 \right]
\end{align}
and $\gamma(U;E) := \set{F: U \to E| F \text{ linear,} \, \norm{F}_{\gamma(U;E)} < \infty}$ as the space of $\gamma$-radonifying operators from $U$ to $E$. For a survey on $\gamma$-radonifying operators see \cite{MR2655391}. In our application we have $E = W^{1,p}_{\com{0},x}$.
\begin{lemma} \label{lem:G-gamma}
Let Assumption \ref{ass:Noise} be satisfied. Then
\begin{align*}
G: L^p_\mathcal{F}(\Omega \times I; W^{1,p}_{\com{0},x}) \to L^p_\mathcal{F}(\Omega \times I; \gamma(U;W^{1,p}_{0,x}))
\end{align*}
is bounded.
\end{lemma}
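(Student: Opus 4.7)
The plan is to reduce the $\gamma$-norm over $W^{1,p}_0$ to a pointwise square-function that we can then control with the growth and Lipschitz assumptions on the coefficients $g_j$.

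First I would unpack what the $\gamma$-norm is computed on. Since $W$ has the expansion \eqref{rep:W} and by Assumption \ref{ass:Noise}\eqref{ass:cond2} we have $G(v)(u_j) = g_j(\cdot, v)$, the boundary condition \ref{ass:boundary-noise} together with the chain rule
\begin{align*}
\nabla_x \bigl[g_j(\cdot, v)\bigr] = (\nabla_x g_j)(\cdot, v) + (\nabla_\xi g_j)(\cdot, v)\, \nabla v
\end{align*}
already shows $g_j(\cdot, v) \in W^{1,p}_{0,x}$ whenever $v \in W^{1,p}_{0,x}$. Next I would use that, for $p\in[2,\infty)$, the space $W^{1,p}_{0,x}$ embeds isometrically (modulo Poincaré) into a product of $L^p$-spaces via $w \mapsto (w, \nabla w)$, so that it is of type~$2$. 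Together with the Khintchine--Kahane characterization of $\gamma$-norms on $L^p$-spaces (see e.g.\ \cite{MR2655391} and the references in \cite{MR4116708}), this gives
\begin{align*}
\norm{G(v)}_{\gamma(U;W^{1,p}_{0,x})}^{2}
\eqsim
\Bigl\|\bigl(\textstyle\sum_{j}\abs{g_j(\cdot,v)}^{2}\bigr)^{1/2}\Bigr\|_{L^p_x}^{2}
+
\Bigl\|\bigl(\textstyle\sum_{j}\abs{\nabla_x[g_j(\cdot,v)]}^{2}\bigr)^{1/2}\Bigr\|_{L^p_x}^{2}.
\end{align*}

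Having reduced the problem to a pointwise square-function estimate, I would invoke the growth and Lipschitz conditions \eqref{ass:growth} and \eqref{ass:Lipschitz}. Combining them with the chain-rule identity above yields pointwise in $x$
\begin{align*}
\sum_{j}\abs{g_j(x,v)}^{2} + \sum_{j}\abs{\nabla_x[g_j(\cdot,v)](x)}^{2}
\lesssim (1 + \abs{v(x)}^{2})(1 + c_{\text{lip}}) + c_{\text{lip}}\abs{\nabla v(x)}^{2}.
\end{align*}
Raising to the power $p/2$ and integrating over $\mathcal{O}$ gives the deterministic bound
\begin{align*}
\norm{G(v)}_{\gamma(U;W^{1,p}_{0,x})}^{p}
\lesssim 1 + \norm{v}_{W^{1,p}_{0,x}}^{p},
\end{align*}
after which integrating in $(\omega,t)$ against the progressive $\sigma$-algebra finishes the boundedness assertion. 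Progressive measurability of $\omega\mapsto G(v(\omega,t))$ into $\gamma(U;W^{1,p}_{0,x})$ follows from that of $v$ and the fact that the above estimate also controls differences $G(v_1)-G(v_2)$ through $c_{\text{lip}}$.

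The main obstacle is the first step, namely justifying the square-function identification of the $\gamma$-norm inside $W^{1,p}_{0,x}$. For $L^p$ this is classical, but for $W^{1,p}_{0,x}$ one must check that the embedding $w\mapsto(w,\nabla w)$ is compatible with the $\gamma$-norm, i.e.\ that bounded linear maps (like $\nabla$) factor through $\gamma(U;\cdot)$ with the correct norm, and that the square-function equivalence transfers via this embedding. Everything else (the chain rule, the termwise growth/Lipschitz bounds, and the final $L^p$-integration) is mechanical once that identification is in place.
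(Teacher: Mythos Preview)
Your proposal is correct and follows essentially the same route as the paper: both reduce the $\gamma(U;W^{1,p}_{0,x})$-norm to the pointwise square function $\bigl(\sum_j |g_j|^2 + |\nabla_x[g_j(\cdot,v)]|^2\bigr)^{1/2}$ and then apply \eqref{ass:growth}--\eqref{ass:Lipschitz}. The paper carries out your ``main obstacle'' explicitly---it expands the $W^{1,p}$-norm, applies Kahane--Khintchine twice together with Fubini, and works with a truncation $G^J$ before passing to the limit via lower semicontinuity---whereas you invoke the square-function characterisation of $\gamma(U;L^p)$ as a black box and transfer it through the isometric embedding $w\mapsto(w,\nabla w)$; both arguments are equivalent.
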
 
\begin{proof}
Let $v \in  W^{1,p}_{\com{0},x}$ and $J\in \mathbb{N}$. Define the truncated operator
\begin{align*}
u \mapsto G^J(v)(u):= \sum_{j=1}^J g_j(\cdot,v) (u,u_j)_U.
\end{align*}
Due to the Kahane-Khintchine inequalities, Fubini's Theorem and the assumptions \eqref{ass:growth} and \eqref{ass:Lipschitz},
\begin{align*}
&\norm{G^J(v)}_{\gamma(U;W^{1,p}_x)} = \left( \mathbb{E}_{\gamma} \left[\norm{\sum_{j =1}^J \gamma_j g_j(\cdot, v)}_{W^{1,p}_x}^2 \right]\right)^\frac{1}{2} \\
&\eqsim \left( \mathbb{E}_{\gamma} \left[\norm{\sum_{j =1}^J \gamma_j g_j(\cdot, v)}_{W^{1,p}_x}^p \right] \right)^\frac{1}{p}  \\
&= \left( \int_{\mathcal{O}}\mathbb{E}_{\gamma} \left[ \abs{\sum_{j =1}^J \gamma_j g_j(\cdot, v)}^p + \abs{\sum_{j =1}^J \gamma_j (\nabla_x g_j(\cdot, v) + \nabla_\xi g_j(x,v) \nabla v)}^p\right] \dx  \right)^\frac{1}{p} \\
&\eqsim \left(  \int_{\mathcal{O}} \left(\mathbb{E}_{\gamma} \left[ \abs{\sum_{j=1}^J \gamma_j g_j(\cdot, v)}^2 \right] \right)^\frac{p}{2} \hspace{-6pt}+\left( \mathbb{E}_{\gamma} \left[ \abs{\sum_{j=1}^J \gamma_j (\nabla_x g_j(\cdot, v) + \nabla_\xi g_j(x,v) \nabla v)}^2\right] \right)^\frac{p}{2} \hspace{-10pt}\dx  \right)^\frac{1}{p} \\
&=\left(  \int_{\mathcal{O}} \left( \sum_{j=1}^J \abs{ g_j(\cdot, v)}^2 \right)^\frac{p}{2} +\left(  \sum_{j =1}^J  \abs{\nabla_x g_j(\cdot, v) + \nabla_\xi g_j(x,v) \nabla v}^2 \right)^\frac{p}{2} \dx  \right)^\frac{1}{p}\\
&\lesssim \norm{v}_{W^{1,p}_x} + 1,
\end{align*}
\com{where the constant is independent of $J$. } \com{Due to lower semicontinuity of the norm, we can pass to the limit}
\begin{align*}
\norm{G(v)}_{\gamma(U;W^{1,p}_x)}\lesssim \norm{v}_{W^{1,p}_x} + 1.
\end{align*}
For general $v \in L^p_\mathcal{F}(\Omega \times I; W^{1,p}_x)$ we apply the above result pointwise, i.e.
\begin{align*}
\norm{G(v)}_{L^p(\Omega \times I;\gamma(U;W^{1,p}_x))}^p &=  \mathbb{E} \left[ \int_0^T \norm{G(v)}_{\gamma(U;W^{1,p}_x)}^p \ds \right]\lesssim \mathbb{E} \left[ \int_0^T \norm{v}_{W^{1,p}_x}^p \ds \right] +1.
\end{align*}
The zero boundary data for $G(v)$ follows by \ref{ass:boundary-noise} in \com{A}ssumption \ref{ass:Noise}.
\end{proof}

\com{In the construction of the stochastic integral, the geometry of the underlying Banach space plays an important role. } 
\begin{definition} \label{def:type-q}
Let $\left(E,\norm{\cdot}_E \right)$ be a Banach space and $\set{\gamma_j}_{j\in \mathbb{N}} \sim \mathcal{N}(0,1)$ \com{independent and identically distributed. }
\begin{enumerate}
\item $E$ is of type $q \in [1,2]$ if there exists a constant $C >0 $ such that for all finite sequences $\set{e_j}_{j\in \mathbb{N}} \in E$
\begin{align} \label{eq:type-q1}
\left( \mathbb{E}_\gamma \left[ \norm{\sum_{j \in \mathbb{N}} \gamma_j e_j}_E^2 \right] \right)^\frac{1}{2} \leq C\left( \sum_{j \in \mathbb{N}} \norm{e_j}_E^q \right)^\frac{1}{q}.
\end{align}
\item $E$ is of cotype $q \in [2,\infty ]$ if there exists a constant $C >0 $ such that for all finite sequences $\set{e_j}_{j\in \mathbb{N}} \in E$
\begin{align} \label{eq:type-q2}
\left( \sum_{j \in \mathbb{N}} \norm{e_j}_E^{\com{q}} \right)^\frac{1}{\com{q}} \leq C\left( \mathbb{E}_\gamma \left[ \norm{\sum_{j \in \mathbb{N}} \gamma_j e_j}_E^2 \right] \right)^\frac{1}{2}.
\end{align}
\end{enumerate}
\end{definition}
Hilbert spaces are of type $2$ and of cotype $2$. Even the converse is true, if a Banach space is type $2$ and cotype $2$, then it is isomorphic to a Hilbert space, cf. \cite{MR341039}. On the classical Lebesgue scale the following result is valid.
\begin{proposition}[\cite{MR3617459} Proposition 10.36] \label{prop:type2}
If $q \in [1,2]$, every $L^q$-space is of type $q$ and of cotype $2$. If $q \in [2,\infty)$, every $L^q$-space is of type $2$ and of cotype $q$.
\end{proposition}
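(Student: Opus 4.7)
The plan is to reduce each of the four inequalities to a single square-function estimate via Khintchine's inequality, and then to compare the square function with the $\ell^q$- or $\ell^2$-norm of the coefficients by Minkowski's inequality in $L^{q/2}$ (or its reverse when $q/2 < 1$). Throughout, fix a finite sequence $\set{f_j} \subset L^q(\mathcal{O})$.

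First I would combine the Kahane--Khintchine inequality (which equates the $L^2(\Omega_\gamma)$- and $L^q(\Omega_\gamma)$-norms of a Gaussian sum up to constants depending only on $q$) with Fubini's theorem and the scalar Khintchine inequality applied pointwise in $x$. This yields the unifying identity
\begin{align*}
\left( \mathbb{E}_\gamma \left\| \sum_j \gamma_j f_j \right\|_{L^q(\mathcal{O})}^2 \right)^{1/2}
\eqsim \left\| \left(\sum_j |f_j|^2 \right)^{1/2} \right\|_{L^q(\mathcal{O})},
\end{align*}
with constants depending only on $q$. All four inequalities in Definition~\ref{def:type-q} then reduce to comparing this square function with $\left(\sum_j \norm{f_j}_{L^q}^r\right)^{1/r}$ for the appropriate $r \in \set{q,2}$.

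For $q \in [2,\infty)$ I would exploit that $L^{q/2}(\mathcal{O})$ is a genuine Banach space. Type~$2$ follows from Minkowski's inequality, $\|\sum_j |f_j|^2\|_{L^{q/2}} \leq \sum_j \||f_j|^2\|_{L^{q/2}} = \sum_j \norm{f_j}_{L^q}^2$; taking square roots controls the square function. Cotype~$q$ uses the elementary pointwise bound $\sum_j a_j^q \leq (\sum_j a_j^2)^{q/2}$ for non-negative $a_j$ with $q \geq 2$ (i.e.\ $\ell^2 \hookrightarrow \ell^q$), applied to $a_j = |f_j(x)|$ and then integrated over $\mathcal{O}$.

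For $q \in [1,2]$ the roles swap. Type~$q$ follows from the reverse pointwise inequality $(\sum_j a_j^2)^{q/2} \leq \sum_j a_j^q$, valid when $q \leq 2$. Cotype~$2$ is the only subtle case: since $q/2 \leq 1$, the quasi-norm $\norm{\cdot}_{L^{q/2}}$ fails the triangle inequality, but for non-negative measurable functions it satisfies the reverse Minkowski inequality $\|\sum_j h_j\|_{L^p} \geq \sum_j \|h_j\|_{L^p}$ for $p \in (0,1]$ (a direct consequence of concavity of $t \mapsto t^p$). Applied to $h_j = |f_j|^2$, this gives $\|(\sum_j |f_j|^2)^{1/2}\|_{L^q}^2 \geq \sum_j \norm{f_j}_{L^q}^2$. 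This reverse Minkowski step is the only non-elementary ingredient and will be the main point to verify carefully.
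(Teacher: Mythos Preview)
The paper does not prove this proposition; it is merely quoted as a result from the cited reference. Your sketch is the standard argument and is correct in outline: reduce to the square-function representation via Kahane--Khintchine and Fubini, then compare $\ell^2$ with $\ell^q$ either pointwise (via $\ell^{\min(q,2)} \hookrightarrow \ell^{\max(q,2)}$) or in norm (via Minkowski or its reverse in $L^{q/2}$).

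One small correction: your parenthetical justification for the reverse Minkowski step is off. Concavity of $t \mapsto t^p$ for $p \in (0,1]$ gives the \emph{subadditivity} $(a+b)^p \leq a^p + b^p$, which points the wrong way. The reverse Minkowski inequality $\bignorm{\sum_j h_j}_{L^p} \geq \sum_j \norm{h_j}_{L^p}$ for non-negative $h_j$ and $0 < p \leq 1$ is nonetheless true; it follows, for instance, from the reverse H\"older inequality, or more directly from the ordinary Minkowski inequality applied with exponent $1/p \geq 1$ after swapping the roles of the sum in $j$ and the integral in $x$. Since you flagged this as ``the main point to verify carefully'', make sure you record the correct argument there.
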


Now we can construct the stochastic integral.
\begin{proposition}\label{prop:ConstructionStochIntegral}
Let Assumption \ref{ass:Noise} be true. Then the operator $\mathcal{I}$ defined through
\begin{align}\label{def:stoch_integral}
\mathcal{I}(G(v)) := \int_0^{\bullet} G(v)(\dd W_s) := \sum_{j \in \mathbb{N}} \int_0^{\bullet} g_j(\cdot,v) \dd \beta^j_s 
\end{align} 
defines a bounded linear operator from $L^2_{\mathcal{F}}(\Omega \times I;L_2(U; L^2_x))$ to $ L^2_\omega C_t L^2_x $. Moreover,
\begin{itemize}
\item $\mathcal{I}(G(v))$ is an $L^2_x$-valued martingale with respect to $\left(\mathcal{F}_t\right)_{t \in I}$, 
\item (It\^o isometry) for all $t \in I$ it holds
\begin{align*}
\mathbb{E} \left[ \norm{\mathcal{I}(G(v)) (t)}_{L^2_x}^2 \right] =\mathbb{E} \left[\int_0^t \norm{G(v)}^2_{L_2(\com{U};L^2_x)} \ds \right] .
\end{align*}
\end{itemize}
Let $p \geq 2$. Then we have additionally the gradient estimate
\begin{align}\label{eq:Gradient_estimate_stoch_integral}
 \left( \mathbb{E} \left[\sup_{t \in I} \norm{\mathcal{I}(G(v))(t)}_{W^{1,p}_x}^p \right] \right)^\frac{1}{p}\lesssim \com{\sqrt{p} \left( \norm{v}_{L^p_\omega L^2_t W^{1,p}_x} + 1\right)}.
\end{align}
\end{proposition}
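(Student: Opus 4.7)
The construction and the basic properties follow the classical Hilbert-space Itô theory; the novelty is the $W^{1,p}_x$-estimate, which is where the $\gamma$-radonifying / type $2$ machinery enters.

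\textbf{Step 1 (Construction in $L^2_x$, Itô isometry, martingale property).} I would first define $\mathcal{I}(G(v))$ for elementary processes of the form $G(v)(t,\omega)=\sum_{j=1}^{J}\mathds{1}_{(t_{k-1},t_{k}]}(t)\,\Phi_k(\omega)$ with $\Phi_k$ $\mathcal{F}_{t_{k-1}}$-measurable, via the obvious finite sum $\sum_k \Phi_k(W_{t_k}-W_{t_{k-1}})$. For such processes the Itô isometry $\mathbb{E}\|\mathcal{I}(G(v))(t)\|_{L^2_x}^2=\mathbb{E}\int_0^t\|G(v)\|_{L_2(U;L^2_x)}^2\,\mathrm{d}s$ is immediate from the independent-increment property of $W$ and the orthogonality of the $\beta^j$. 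Density of elementary processes in $L^2_{\mathcal F}(\Omega\times I;L_2(U;L^2_x))$ (which is a Hilbert space) then extends $\mathcal I$ to a bounded linear isometry, and the martingale property is preserved under the $L^2$-limit. Continuity of sample paths (the $C_t$) and the $L^2_\omega C_t L^2_x$ bound then follow from Doob's maximal inequality applied to the martingale $t\mapsto \mathcal I(G(v))(t)$.

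\textbf{Step 2 (Type $2$ structure of the ambient space).} For the gradient estimate I would first record that $W^{1,p}_{0,x}$ is of type $2$ whenever $p\geq 2$: it embeds isometrically into $L^p_x\times (L^p_x)^n$ via $u\mapsto(u,\nabla u)$, Proposition \ref{prop:type2} gives type $2$ for $L^p_x$, and the type-$2$ property is stable under finite products and closed subspaces. With type $2$ at hand, the stochastic integral $\mathcal I$ extends to $\gamma(U;W^{1,p}_{0,x})$-valued progressively measurable integrands in the sense of Brze\'zniak / van Neerven--Weis, and one has the vector-valued Burkholder--Davis--Gundy inequality
\begin{align*}
\biggl(\mathbb{E}\sup_{t\in I}\bigl\|\mathcal{I}(G(v))(t)\bigr\|_{W^{1,p}_x}^p\biggr)^{\!1/p}
\lesssim \sqrt{p}\,\biggl(\mathbb{E}\Bigl(\int_0^T\|G(v)\|_{\gamma(U;W^{1,p}_x)}^2\,\mathrm{d}s\Bigr)^{\!p/2}\biggr)^{\!1/p},
\end{align*}
with the $\sqrt p$ coming from the type-$2$ BDG constant. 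I would cite \cite{MR1313905,MR2109586} for this.

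\textbf{Step 3 (Closing the gradient estimate).} Now Lemma \ref{lem:G-gamma} yields the pointwise bound $\|G(v)\|_{\gamma(U;W^{1,p}_x)}\lesssim \|v\|_{W^{1,p}_x}+1$ for a.e.\ $(\omega,t)$, so that
\begin{align*}
\int_0^T\|G(v)\|_{\gamma(U;W^{1,p}_x)}^2\,\mathrm{d}s\lesssim \int_0^T\|v\|_{W^{1,p}_x}^2\,\mathrm{d}s+T.
\end{align*}
Taking the $p/2$-power, the expectation, and then the $1/p$-root gives the right-hand side $\|v\|_{L^p_\omega L^2_t W^{1,p}_x}+1$, which combined with Step~2 produces \eqref{eq:Gradient_estimate_stoch_integral}. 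The zero boundary values of $\mathcal I(G(v))(t)$ are inherited from those of each $g_j(\cdot,v)$ (Assumption \ref{ass:Noise}\ref{ass:boundary-noise}).

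\textbf{Main obstacle.} The routine part is Step~1; the delicate point is Step~2, namely invoking the correct form of the type-$2$ BDG inequality with the explicit $\sqrt p$ constant and verifying that $W^{1,p}_{0,x}$ fits into this framework. The latter is transparent through the embedding into a product of $L^p$-spaces; the former is a direct appeal to the cited literature, so no new estimates are required beyond tracking constants.
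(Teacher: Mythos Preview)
Your proposal is correct and follows essentially the same route as the paper: the first part is dismissed as standard, and the gradient estimate \eqref{eq:Gradient_estimate_stoch_integral} is obtained by combining the type-$2$ BDG inequality for the stochastic integral (the paper cites \cite{MR4116708} rather than \cite{MR1313905,MR2109586}, but the content is the same) with the pointwise $\gamma$-norm bound from Lemma~\ref{lem:G-gamma}. Your explicit verification that $W^{1,p}_{0,x}$ is of type $2$ via the embedding into a product of $L^p$-spaces is a nice addition that the paper only alludes to.
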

\begin{proof}
The first part of Proposition \ref{prop:ConstructionStochIntegral} is standard and we skip its proof. 

\com{In order to prove  \eqref{eq:Gradient_estimate_stoch_integral} we employ the stability of the stochastic integral in type 2 Banach spaces as presented in \cite{MR4116708}. Let $v \in L^p(\Omega \times I; W^{1,p}_{0,x})$. Lemma \ref{lem:G-gamma} ensures that $G(v)$ is stochastically integrable on $W^{1,p}_{0,x}$. Thus, 
\begin{align*}
\left( \mathbb{E} \left[ \sup_{t \in I} \norm{\mathcal{I}(G(v))(t)}_{W^{1,p}_{x}}^p \right] \right)^\frac{1}{p} &\lesssim \sqrt{p} \norm{G(v)}_{L^p_\omega L^2_t \gamma(U;W^{1,p}_x)} \\
&\lesssim \sqrt{p} \left( \norm{v}_{L^p_\omega L^2_t W^{1,p}_x} + 1\right).
\end{align*} 
}

\end{proof}

\begin{remark}
In the regime $p \in (1,2)$ the inequality \eqref{eq:Gradient_estimate_stoch_integral} fails. This is directly linked to the fact that $L^r$, $r \in (1,2)$ is not of type $2$. \com{However, if one uses the stochastic integration theory in UMD (unconditional martingale differences) Banach spaces as done in \cite{MR2330977}, it is still possible to estimate }
\begin{align*}
&\mathbb{E} \left[ \sup_{t \in I} \norm{\mathcal{I}(G(v))(t)}_{W^{1,p}_x}^p \right] \lesssim \mathbb{E} \left[  \int_\mathcal{O} \left(\int_0^T 1 + \abs{v}^2 + \abs{\nabla v}^2 \dd s \right)^\frac{p}{2}\dx \right],
\end{align*}
i.e. $\mathcal{I} \circ G: L^p_\omega W^{1,p}_x L^2_t \rightarrow L^p_\omega C_t W^{1,p}_x$ is bounded. 
\end{remark}

\subsection{Perturbed gradient flow} \label{sec:gradient_flow}
Let $\kappa \geq 0$ and $p \in \com{[2},\infty)$. For $\xi \in \R^{n\times N}$ we define
\begin{align} \label{eq:S}
S(\xi) := \varphi'(\abs{\xi}) \frac{\xi}{\abs{\xi}} = \left(\kappa + \abs{\xi} \right)^{p-2} \xi
\end{align}
and
\begin{align}\label{eq:V}
V(\xi) := \sqrt{\varphi'(\abs{\xi})} \frac{\xi}{\abs{\xi}} = \left( \kappa + \abs{\xi} \right)^\frac{p-2}{2} \xi,
\end{align}
where $\varphi(t):= \int_0^t \left(\kappa + s \right)^{p-2} s \ds$. The nonlinear functions $S$ and $V$ are closely related. In particular the following Lemmata are of great imp\com{or}tance. \com{The proofs can be found in \cite{DieE08}. For more details we refer to \cite{DR,BelDieKre2012,DieForTomWan20}.}
\begin{lemma}[$V$-coercivity] \label{lem:V-coercive}
Let $\xi_1, \xi_2 \in \R^{n\times N}$. Then it holds
\begin{align} \label{eq:V-coercive}
\begin{aligned}
(S(\xi_1) - S(\xi_2)) : (\xi_1 - \xi_2) &\eqsim \abs{V(\xi_1) - V(\xi_2)}^2 \\
&\eqsim \left( \kappa + \abs{\xi_1} + \abs{\xi_1 - \xi_2} \right)^{p-2} \abs{\xi_1 - \xi_2}^2.
\end{aligned}
\end{align}
\end{lemma}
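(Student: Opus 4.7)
The plan is to reduce both equivalences to a single auxiliary quantity, namely the shifted $N$-function value $\varphi_{|\xi_1|}(|\xi_1-\xi_2|)$, where for $a\ge 0$ the shifted $N$-function of $\varphi$ is defined by $\varphi'_a(t)/t := \varphi'(a+t)/(a+t)$. A short calculation using $\varphi'(s) = (\kappa+s)^{p-2}s$ gives $\varphi'_a(t) = (\kappa+a+t)^{p-2}t$, and integrating together with the monotonicity of $s\mapsto (\kappa+a+s)^{p-2}$ (restricting to $s\in[t/2,t]$ for the lower bound) yields $\varphi_a(t) \eqsim (\kappa + a + t)^{p-2} t^2$ with constants depending only on $p$. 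Combined with the two-sided bound $|\xi_1|+|\xi_2|\eqsim |\xi_1|+|\xi_1-\xi_2|$ coming from the triangle inequality, this identifies the third expression in \eqref{eq:V-coercive} with $\varphi_{|\xi_1|}(|\xi_1-\xi_2|)$.

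For the $S$-equivalence, I would apply the fundamental theorem of calculus along the segment $\xi(\tau) := \xi_2 + \tau(\xi_1-\xi_2)$, $\tau\in[0,1]$:
\[
(S(\xi_1)-S(\xi_2)):(\xi_1-\xi_2) = \int_0^1 \bigl(DS(\xi(\tau))(\xi_1-\xi_2)\bigr):(\xi_1-\xi_2)\,d\tau.
\]
Differentiating $S$ explicitly gives $DS(\xi) = (\kappa+|\xi|)^{p-2}\,\mathrm{Id} + (p-2)(\kappa+|\xi|)^{p-3}|\xi|^{-1}\xi\otimes\xi$, whose symmetric part is, for $p\ge 2$, uniformly positive definite with eigenvalues in the interval $[(\kappa+|\xi|)^{p-2},\,(p-1)(\kappa+|\xi|)^{p-2}]$. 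Hence the integrand is $\eqsim (\kappa+|\xi(\tau)|)^{p-2}|\xi_1-\xi_2|^2$, and one reduces to the scalar estimate
\[
\int_0^1 (\kappa+|\xi(\tau)|)^{p-2}\,d\tau \eqsim (\kappa+|\xi_1|+|\xi_2|)^{p-2},
\]
whose upper bound is trivial and whose lower bound follows by restricting the integration to a subinterval on which $|\xi(\tau)|\gtrsim \max(|\xi_1|,|\xi_2|)$.

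The $V$-equivalence proceeds analogously: since $|DV(\xi)|^2\eqsim (\kappa+|\xi|)^{p-2}$, the same FTC argument combined with Cauchy--Schwarz immediately yields the upper bound $|V(\xi_1)-V(\xi_2)|^2 \lesssim (\kappa+|\xi_1|+|\xi_2|)^{p-2}|\xi_1-\xi_2|^2$. For the reverse inequality I would reduce to a polar analysis: writing $\xi_i = r_i e_i$ with unit vectors $e_i$ and $\psi(t) := (\kappa+t)^{(p-2)/2}t$, expand
\[
|V(\xi_1)-V(\xi_2)|^2 = \psi(r_1)^2 + \psi(r_2)^2 - 2\psi(r_1)\psi(r_2)\langle e_1,e_2\rangle,
\]
compare with the analogous expansion of $|\xi_1-\xi_2|^2$, and use the scalar mean-value estimate $|\psi(r_1)-\psi(r_2)|\eqsim (\kappa+r_1+r_2)^{(p-2)/2}|r_1-r_2|$ (valid for $p\ge 2$ since $\psi'(t)\eqsim(\kappa+t)^{(p-2)/2}$) to extract the ratio $|V(\xi_1)-V(\xi_2)|^2 / |\xi_1-\xi_2|^2 \eqsim (\kappa+r_1+r_2)^{p-2}$ after a case split on whether $\langle e_1,e_2\rangle$ is close to $1$ or not.

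I expect the main obstacle to be the lower bound for the $V$-equivalence in the \emph{near-antiparallel} regime, where $|\xi_1-\xi_2|\eqsim r_1+r_2$ while the FTC integrand $DV(\xi(\tau))(\xi_1-\xi_2)$ almost cancels over $\tau\in[0,1]$: here the pointwise ellipticity of $DV$ along the segment no longer controls the lower bound, and one must instead argue via the angular term in the polar expansion, reducing the claim to an elementary inequality of the form $\psi(r_1)^2+\psi(r_2)^2-2\psi(r_1)\psi(r_2)c \gtrsim (\kappa+r_1+r_2)^{p-2}(r_1^2+r_2^2-2r_1r_2 c)$ for $c=\langle e_1,e_2\rangle\in[-1,1]$, which in turn is verified by splitting according to the sign of $c$.
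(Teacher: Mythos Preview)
The paper does not supply its own proof of this lemma; it simply cites \cite{DieE08} (and related works) and moves on. Your proposal is essentially the argument found in those references: the shifted $N$-function framework identifying the third quantity with $\varphi_{|\xi_1|}(|\xi_1-\xi_2|)$, the FTC-along-a-segment computation for $S$ reducing to the scalar averaging lemma $\int_0^1 (\kappa+|\xi(\tau)|)^{p-2}\,d\tau \eqsim (\kappa+|\xi_1|+|\xi_2|)^{p-2}$, and a separate treatment of the $V$-lower bound. So your route matches the literature the paper defers to.

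Two small points worth tightening. First, your justification of the lower bound in the scalar averaging lemma (``restricting to a subinterval where $|\xi(\tau)|\gtrsim \max(|\xi_1|,|\xi_2|)$'') is stated too casually: when the segment passes close to the origin there may be no such subinterval of uniformly positive length, and the usual proof instead distinguishes the cases $|\xi_1-\xi_2|\le \tfrac12(|\xi_1|+|\xi_2|)$ and $|\xi_1-\xi_2|> \tfrac12(|\xi_1|+|\xi_2|)$. Second, in your polar argument for the $V$-lower bound, the term-by-term comparison $\psi(r_1)\psi(r_2)\gtrsim (\kappa+r_1+r_2)^{p-2}r_1r_2$ is false when $r_1$ and $r_2$ are very unbalanced; you need to observe that in that regime the radial term $(\psi(r_1)-\psi(r_2))^2$ already dominates the full right-hand side. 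None of this is a genuine gap --- the argument goes through --- but these are exactly the places where the cited proofs spend their effort.
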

\begin{lemma}[generalized Young's inequality] \label{lem:gen-young}
Let $\xi_1, \xi_2, \xi_3 \in \R^{n\times N}$ and $\delta >0$. Then there exists $c_\delta \geq 1$ such that
\begin{align}\label{eq:gen-young}
\left( S(\xi_1) - S(\xi_2) \right): \left(\xi_2 - \xi_3 \right) \leq \delta \abs{V(\xi_1) - V(\xi_2)}^2 + c_\delta \abs{V(\xi_2) - V(\xi_3)}^2.
\end{align} 
\end{lemma}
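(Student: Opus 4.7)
The strategy is a three-step combination: Cauchy--Schwarz on the bilinear pairing, a weighted scalar Young's inequality, and a change-of-shift argument from $\xi_1$ to $\xi_2$ in the weight of the $N$-function $\varphi(t) = \int_0^t (\kappa+s)^{p-2} s \ds$.

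First, by Cauchy--Schwarz together with the pointwise bound $\abs{S(\xi_1) - S(\xi_2)} \lesssim (\kappa + \abs{\xi_1} + \abs{\xi_1 - \xi_2})^{p-2} \abs{\xi_1 - \xi_2}$ (which follows from the mean value theorem applied to $S$ along the segment from $\xi_2$ to $\xi_1$ together with monotonicity of $t \mapsto (\kappa+t)^{p-2}$), one estimates
\begin{align*}
(S(\xi_1) - S(\xi_2)) : (\xi_2 - \xi_3) \lesssim (\kappa + \abs{\xi_1} + \abs{\xi_1 - \xi_2})^{p-2} \abs{\xi_1 - \xi_2} \cdot \abs{\xi_2 - \xi_3}.
\end{align*}
Distributing the exponent $(p-2)$ symmetrically between the two factors and applying the scalar Young's inequality $ab \leq \eta a^2 + c_\eta b^2$, Lemma \ref{lem:V-coercive} identifies the $\eta$-term with a constant multiple of $\eta \abs{V(\xi_1) - V(\xi_2)}^2$; it then remains to bound $(\kappa + \abs{\xi_1} + \abs{\xi_1 - \xi_2})^{p-2} \abs{\xi_2 - \xi_3}^2$ by $\abs{V(\xi_2) - V(\xi_3)}^2$ plus a small perturbation of $\abs{V(\xi_1) - V(\xi_2)}^2$.

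For the change of shift I would use the triangle inequality $\abs{\xi_1} \leq \abs{\xi_2} + \abs{\xi_1 - \xi_2}$ together with $(a+b)^{p-2} \lesssim a^{p-2} + b^{p-2}$ (valid for $p \geq 2$) to split the weight as $(\kappa + \abs{\xi_2})^{p-2} + \abs{\xi_1 - \xi_2}^{p-2}$. The first summand against $\abs{\xi_2 - \xi_3}^2$ is directly bounded by $\abs{V(\xi_2) - V(\xi_3)}^2$ via Lemma \ref{lem:V-coercive}, using $\kappa + \abs{\xi_2} \leq \kappa + \abs{\xi_2} + \abs{\xi_2 - \xi_3}$. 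For the cross term $\abs{\xi_1 - \xi_2}^{p-2} \abs{\xi_2 - \xi_3}^2$, a further application of Young's inequality with exponents $p/(p-2)$ and $p/2$ (trivial when $p=2$) produces $\epsilon \abs{\xi_1 - \xi_2}^p + c_\epsilon \abs{\xi_2 - \xi_3}^p$, and each term is controlled by the corresponding $\abs{V(\cdot) - V(\cdot)}^2$ via Lemma \ref{lem:V-coercive}, since $\abs{\xi_i - \xi_j}^p \leq (\kappa + \abs{\xi_i} + \abs{\xi_i - \xi_j})^{p-2} \abs{\xi_i - \xi_j}^2$ for $p \geq 2$.

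The only subtle point—and the main obstacle to watch for—is the order in which the two small parameters are fixed: one first chooses $\eta$ depending on $\delta$, which determines $c_\eta$, and only then chooses $\epsilon$ depending on $c_\eta$ so that the accumulated multiples of $\abs{V(\xi_1) - V(\xi_2)}^2$ sum to at most $\delta$, while all remaining contributions on $\abs{V(\xi_2) - V(\xi_3)}^2$ collect into the single constant $c_\delta$. Beyond this bookkeeping, no genuine obstacle is expected, as the argument is a standard instance of the shifted $N$-function calculus developed in \cite{DieE08}.
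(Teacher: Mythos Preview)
Your argument is correct and is essentially the standard shifted $N$-function calculus from \cite{DieE08}: Cauchy--Schwarz, the pointwise growth bound for $S(\xi_1)-S(\xi_2)$, weighted Young, and the shift from the $\xi_1$-weight to the $\xi_2$-weight via $(a+b)^{p-2}\lesssim a^{p-2}+b^{p-2}$ followed by a second Young inequality with exponents $p/(p-2)$ and $p/2$. The paper itself does not supply a proof of this lemma at all; it simply cites \cite{DieE08} (see the sentence preceding Lemma~\ref{lem:V-coercive}), so there is no in-paper argument to compare against, but your proposal reproduces precisely the reasoning that the cited reference contains.
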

\begin{lemma} \label{lem:1side-young}Let $\xi_1, \xi_2, \xi_3 \in \R^{n\times N}$ and $\delta >0$. Then there exists $c_\delta \geq 1$ such that
\begin{align} \label{eq:1side-young}
\left( S(\xi_1) - S(\xi_2) \right) : \xi_3 \leq \delta \abs{V(\xi_1) - V(\xi_2)}^2 \hspace{-2pt}+ c_\delta \left(\kappa + \abs{\xi_1} + \abs{\xi_1 - \xi_2}\right)^{p-2}\abs{\xi_3}^2.
\end{align}
\end{lemma}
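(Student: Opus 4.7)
The plan is to reduce the one-sided inequality to the standard scalar Young's inequality, after factoring the $p$-Laplace-type nonlinearity $S$ into a product involving the weight $(\kappa+\abs{\xi_1}+\abs{\xi_1-\xi_2})^{(p-2)/2}$ on both sides. Throughout, Lemma~\ref{lem:V-coercive} serves as the bridge that turns the weighted increment $\abs{\xi_1-\xi_2}$ into $\abs{V(\xi_1)-V(\xi_2)}$.

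First, I would invoke the pointwise bound
\begin{align*}
\abs{S(\xi_1) - S(\xi_2)} \lesssim \bigl(\kappa + \abs{\xi_1} + \abs{\xi_1 - \xi_2}\bigr)^{p-2} \abs{\xi_1 - \xi_2},
\end{align*}
which is a standard companion to Lemma~\ref{lem:V-coercive} and can be found in \cite{DieE08} (it follows by writing $S(\xi_1)-S(\xi_2)=\int_0^1 DS(\xi_2+\theta(\xi_1-\xi_2))\,\dd\theta\,(\xi_1-\xi_2)$ and controlling $DS$ by the shifted weight). Combined with the Cauchy-Schwarz inequality on $\R^{n\times N}$, this yields
\begin{align*}
\bigl(S(\xi_1)-S(\xi_2)\bigr):\xi_3 \leq \bigl(\kappa+\abs{\xi_1}+\abs{\xi_1-\xi_2}\bigr)^{p-2}\,\abs{\xi_1-\xi_2}\,\abs{\xi_3}.
\end{align*}

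Next, I would split the weight evenly, writing the right-hand side as
\begin{align*}
\Bigl[\bigl(\kappa+\abs{\xi_1}+\abs{\xi_1-\xi_2}\bigr)^{\frac{p-2}{2}}\abs{\xi_1-\xi_2}\Bigr] \cdot \Bigl[\bigl(\kappa+\abs{\xi_1}+\abs{\xi_1-\xi_2}\bigr)^{\frac{p-2}{2}}\abs{\xi_3}\Bigr],
\end{align*}
and apply the scalar Young's inequality $ab\leq \delta a^2 + c_\delta b^2$. The first factor squared is equivalent to $\abs{V(\xi_1)-V(\xi_2)}^2$ by the second equivalence in Lemma~\ref{lem:V-coercive}, absorbing constants into the choice of $\delta$ and $c_\delta$; the second factor squared produces exactly the remainder $c_\delta(\kappa+\abs{\xi_1}+\abs{\xi_1-\xi_2})^{p-2}\abs{\xi_3}^2$ in the statement.

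There is essentially no technical obstacle here: the only care needed is that the constants $\eqsim$ hidden in Lemma~\ref{lem:V-coercive} must be absorbed into $\delta$ (which requires rescaling $\delta$ by a factor depending only on $p$, not on the $\xi_i$). Since the estimate $\abs{S(\xi_1)-S(\xi_2)}\lesssim (\kappa+\abs{\xi_1}+\abs{\xi_1-\xi_2})^{p-2}\abs{\xi_1-\xi_2}$ is itself derived in \cite{DieE08}, the entire argument amounts to invoking these two known facts and applying Young's inequality once, so the lemma really is a straightforward corollary of Lemma~\ref{lem:V-coercive}.
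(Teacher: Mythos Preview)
Your argument is correct and is precisely the standard route to this inequality. Note, however, that the paper does not actually supply its own proof of Lemma~\ref{lem:1side-young}: it groups Lemmas~\ref{lem:V-coercive}--\ref{lem:1side-young} together and states that ``the proofs can be found in \cite{DieE08}''. So there is no in-paper proof to compare against; your sketch simply reproduces the standard argument from that reference, combining the pointwise increment bound for $S$ with Young's inequality and the $V$-equivalence from Lemma~\ref{lem:V-coercive}.
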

\begin{remark}
Lemma \ref{lem:V-coercive} and \ref{lem:gen-young} are still valid if one replaces $\varphi$ in \eqref{eq:S} and \eqref{eq:V} by any uniformly convex $N$-function. 
\end{remark} 
Given some \com{$\mathcal{F}_0$-measurable } initial condition $u_0 : \Omega \times \mathcal{O} \to \R^N$ and a stochastic force $(G,W)$ in the sense of Assumption \ref{ass:Noise}\com{, } we are interested in the system
\begin{subequations}
\label{eq:p-Laplace}
\begin{alignat}{2}\label{eq:Gradient_flow}
\dd u - \Div S(\nabla u) \dt &= G(u) \dd W &&\text{ in } \Omega \times \mathcal{O}_T, 
\end{alignat}
with boundary and initial conditions given by 
\begin{alignat}{2}
u &= 0 &&\text{ on } \Omega \times I \times \partial \mathcal{O}, \\
u(0) &= u_0 &&\text{ on } \Omega \times  \mathcal{O}.
\end{alignat}
\end{subequations}
The system \eqref{eq:Gradient_flow} is a perturbed version of the gradient flow of the energy $\mathcal{J}: W^{1,p}_{0,x} \to [0,\infty)$ given by
\begin{align} \label{eq:energy}
\mathcal{J}(u):= \int_{\mathcal{O}} \varphi(\abs{\nabla u}) \dx.
\end{align}
\subsection{Weak and strong solutions} \label{sec:solution}
We fix the concept of solutions as follows.
\begin{definition} \label{def:sol}
Let \com{$u_0 \in L^2_\omega L^2_x$ be $\mathcal{F}_0$-measurable, $p \geq 2$ } and $(G,W)$ \com{be } given by Assumption \ref{ass:Noise}. An $(\mathcal{F}_t)$-adapted process $u \in L^2_x$ is called weak solution to \eqref{eq:p-Laplace} if 
\begin{enumerate}
\item $u \in \com{L^2_\omega} C_t L^2_x \cap \com{L^p_\omega} L^p_t W^{1,p}_{0,x}$,
\item for all $t \in I$, $\xi \in C^\infty_{0,x}$ \com{ and $\mathbb{P}-a.s.$ } it holds
\begin{align} \label{eq:p-Laplace_weak}
\int_{\mathcal{O}} (u(t) - u_0) \cdot \xi \dx + \int_0^t \int_{\mathcal{O}} S(\nabla u) :\nabla \xi \dx \ds = \int_{\mathcal{O}} \int_0^t G(u)\dd W_s \cdot \xi \dx.
\end{align}
\end{enumerate}
The process $u$ is called strong solution if it is a weak solution and additionally satisfies
\begin{enumerate}
\item $\Div S(\nabla u) \in \com{L^2_\omega}L^2_t L^2_x$ ,
\item for all $t \in I$ \com{ and $\mathbb{P}-a.s.$ } it holds
\begin{align}
u(t) - u_0 - \int_0^t \Div S(\nabla u) \ds = \int_0^t G(u) \dd W_s
\end{align}
as an equation in $L^2_x$.
\end{enumerate}
\end{definition}
The existence of strong solutions to gradient flow like equations has been established by Gess in \cite{Gess2012Strong}. In particular, it includes the case of the $p$-Laplace system for $p \geq 2$.
\begin{theorem}[\cite{Gess2012Strong} Theorem 4.12] \label{thm:strong-sol}
Assume $p\geq 2$ and $\mathcal{O}$ \com{to be } a bounded convex domain. Let $(G,W)$ be given by Assumption \ref{ass:Noise} and $u_0 \in L^p_\omega W^{1,p}_x$ be $\mathcal{F}_0$-measurable. Then there exists a unique strong solution $u$ to \eqref{eq:p-Laplace}. Moreover,
\begin{align}\label{eq:est-strong}
\mathbb{E}\left[\sup_{t\in I} \norm{u}_{W^{1,p}_x}^p  + \int_0^T \norm{\Div S(\nabla u)}_{L^2_x}^2 \dt \right] \lesssim \mathbb{E} \left[ \norm{u_0}_{W^{1,p}_x}^p \right] + 1.
\end{align}
\end{theorem}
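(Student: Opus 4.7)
The plan is to follow the Galerkin strategy of Gess, using a spectral basis adapted to the gradient flow structure and carrying out the central energy calculation by applying Itô's formula to the nonlinear energy $\mathcal{J}$ defined in \eqref{eq:energy}.

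First, I would construct a Galerkin approximation. Let $\set{w_j}_{j\in\mathbb{N}}$ be the Dirichlet eigenfunctions of $-\Delta$ on $\mathcal{O}$, orthonormal in $L^2_x$ and orthogonal in $W^{1,2}_{0,x}$, and let $\Pi_n$ denote the $L^2$-orthogonal projection onto $V_n := \mathrm{span}\set{w_1,\ldots,w_n}$. Convexity of $\mathcal{O}$ guarantees via Grisvard's theorem that $w_j \in W^{2,2}_x$, so that $\Pi_n$ is well-behaved with respect to the boundary condition and the nonlinear flux. Solve the finite-dimensional SDE
\begin{align*}
\dd u^n = \Pi_n \Div S(\nabla u^n)\,\dt + \Pi_n G(u^n)\,\dd W, \qquad u^n(0) = \Pi_n u_0,
\end{align*}
in $V_n$; existence and uniqueness follow from the monotonicity of $-\Div S(\nabla\cdot)$ combined with the Lipschitz bounds on $G$ from Assumption~\ref{ass:Noise}.

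Next, I would derive the central uniform bound by applying Itô's formula to $\mathcal{J}(u^n)$. Since the $L^2$-gradient of $\mathcal{J}$ at $v \in V_n$ is $-\Div S(\nabla v)$, the expansion reads
\begin{align*}
\dd\mathcal{J}(u^n) + \norm{\Pi_n \Div S(\nabla u^n)}_{L^2_x}^2\,\dt = \dd M_t + \tfrac{1}{2} \sum_{j\in\mathbb{N}} \langle \mathcal{J}''(u^n)\Pi_n g_j(u^n), \Pi_n g_j(u^n)\rangle \,\dt,
\end{align*}
where $M$ is a local martingale. Using $\varphi''(t)\eqsim(\kappa+t)^{p-2}$ and the pointwise bound $\sum_{j}|\nabla g_j(u^n)|^2 \lesssim 1 + |u^n|^2 + |\nabla u^n|^2$ from \eqref{ass:growth}, \eqref{ass:Lipschitz}, the Itô correction is dominated by $c(\mathcal{J}(u^n)+1)$ plus lower-order terms. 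Taking expectations, applying BDG to $M$, Gronwall to the $\mathcal{J}$-term, and using $\mathcal{J}(v) \eqsim \norm{\nabla v}_{L^p_x}^p$ up to lower-order contributions, yields the uniform estimate
\begin{align*}
\mathbb{E}\Bigl[\sup_{t\in I}\norm{u^n}_{W^{1,p}_x}^p + \int_0^T \norm{\Pi_n\Div S(\nabla u^n)}_{L^2_x}^2\,\dt\Bigr] \lesssim \mathbb{E}\bigl[\norm{u_0}_{W^{1,p}_x}^p\bigr] + 1.
\end{align*}
Then I would pass to the limit: extract $u^n \rightharpoonup u$ in $L^p_\omega L^\infty_t W^{1,p}_x$ (weak-$*$) and $\Pi_n \Div S(\nabla u^n) \rightharpoonup \chi$ in $L^2_\omega L^2_t L^2_x$, identify $\chi = \Div S(\nabla u)$ via a Minty monotonicity trick supported by Lemma~\ref{lem:V-coercive}, and conclude that $u$ is a strong solution in the sense of Definition~\ref{def:sol}. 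Uniqueness follows by applying Itô to $\norm{u_1-u_2}_{L^2_x}^2$ for two putative solutions, combining $V$-coercivity with the Lipschitz assumption on $G$ and closing via Gronwall.

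The main obstacle is twofold. First, justifying Itô's formula for the nonlinear energy $\mathcal{J}$ and uniformly absorbing the second-variation term: this requires the precise interplay between $\varphi''$ and the $\xi$-Lipschitz constant of $g_j$, so that the $(p-2)$-power weight does not ruin the energy inequality. Second, identifying the weak limit of the nonlinear flux: since $S$ is monotone but not compact, one cannot pass directly in $\Div S(\nabla u^n)$ and must combine lower semicontinuity of $\mathcal{J}$ with Minty's trick — it is precisely here that the gradient-flow testing with $\Div S(\nabla u)$, as opposed to the more classical testing with $-\Delta u$, is indispensable.
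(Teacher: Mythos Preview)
Your overall strategy---Galerkin approximation, It\^o's formula applied to the energy $\mathcal{J}$, passage to the limit via monotonicity, and uniqueness at the level of weak solutions---matches the paper's sketch exactly; the paper gives only these four steps and defers all details to \cite{Gess2012Strong}.

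There is, however, a genuine gap in your specific implementation. With the Laplace eigenfunctions and the $L^2$-orthogonal projection $\Pi_n$, the It\^o correction reads
\begin{align*}
\tfrac{1}{2}\sum_{j}\langle \mathcal{J}''(u^n)\Pi_n g_j(\cdot,u^n),\Pi_n g_j(\cdot,u^n)\rangle
\eqsim \sum_j\int_{\mathcal{O}} (\kappa+\abs{\nabla u^n})^{p-2}\,\abs{\nabla\Pi_n g_j(\cdot,u^n)}^2\dx,
\end{align*}
and dominating this by $c(\mathcal{J}(u^n)+1)$ would require $\Pi_n$ to be uniformly bounded on $W^{1,p}_{0,x}$, or at least on the weighted gradient space with weight $(\kappa+\abs{\nabla u^n})^{p-2}$. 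For $p>2$ the $L^2$-spectral projection associated to $-\Delta$ does \emph{not} enjoy such stability in general; your appeal to the pointwise bound on $\sum_j\abs{\nabla g_j}^2$ silently discards the projection acting on the gradient. This is precisely the obstruction the introduction flags when it says Gess uses ``a special projection operator associated to the energy of the system'' rather than the linear spectral projection---that nonstandard projection is the actual technical content of the cited result, and without it the a~priori estimate at the $W^{1,p}$ level does not close.
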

\begin{proof}
We will only sketch the idea of the proof. Similar to the construction of weak solutions one first performs a Galerkin ansatz $V_J \subset W^{1,p}_{0,x}$. Solving the corresponding SDE provides an approximation $u_J$. Apart from the classical a priori estimates $u_J \in L^2_\omega L^\infty_t L^2_x \cap L^p_\omega L^p_t W^{1,p}_{0,x}$, \com{one } can also get a priori estimates on the gradient level $u_J \in L^p_\omega L^\infty_t W^{1,p}_x$ and $\Div S(\nabla u_J) \in L^2_\omega L^2_t L^2_x$ by an application of It\^o's formula \com{to } $u_J \mapsto \mathcal{J}(u_J)$. Finally, one passes to the limit $J \to \infty$ and identifies the nonlinear terms $S$ and $G$ using the monotonicity of $S$ and the Lipschitz assumption on $G$. Uniqueness \com{already holds for weak solutions}.
\end{proof}
In fact the result can be \com{strengthened } to not only give a bound on the divergence of $S(\nabla u)$ but on the full gradient. An optimal result in the vector valued setting has been obtained in \cite{Balci2021} Theorem 2.6. In contrast to the scalar case there exists a threshold $p_c = 2(2-\sqrt{2})$ such that the regularity transfer to the full gradient is only valid for $p> p_c$. We also want to mention an earlier result of Cianchi and Maz'ya \cite{MR4030249} where they obtained a similar but suboptimal result. 

The geometry of the domain $\mathcal{O}$ plays an important role. \com{Let $\mathcal{O}$ be a bounded Lipschitz domain } such that $\partial \mathcal{O} \in W^{2,1}$, i.e. $\mathcal{O}$ is locally the subgraph of a Lipschitz continuous function of $n-1$ variables, which is also twice weakly differentiable. Denote by $\mathcal{B}$ the second fundamental form on $\partial \mathcal{O}$, by $\abs{\com{\mathcal{B}}}$ its norm and define 
\begin{align}
\mathcal{K}_\mathcal{O}(r) := \sup_{E \subset \partial \mathcal{O} \cap B_r(x), x \in \partial \mathcal{O}} \frac{\int_E \abs{\mathcal{B}}\dd \mathcal{H}^{n-1}}{\text{cap}_{B_1(x)}(E)},
\end{align}
where $B_r(x)$ denotes the ball of radius $r$ around $x$, $\text{cap}_{B_1(x)}(E)$ is the capacity of the set $E$ relative to the ball $B_1(x)$ and $\mathcal{H}^{n-1}$ is the $n-1$ dimensional Hausdorff measure.

\begin{lemma} \label{lem:grad-S}
Let $p>p_c$. Assume that $\mathcal{O}$ is either 
\begin{enumerate}
\item bounded and convex,
\item or bounded, Lipschitz and $\partial \mathcal{O} \in W^{2,1}$ with $\lim_{r\to 0} \mathcal{K}_\mathcal{O}(r) \leq c$.
\end{enumerate}
Let $v \in L^p_\omega L^p_t W^{1,p}_{0,x}$ with $\Div S(\nabla v) \in L^2_\omega L^2_t L^2_x$. 

Then $\nabla S(\nabla v) \in L^2_\omega L^2_t L^2_x$ and
\begin{align*}
\mathbb{E} \left[ \norm{\nabla S(\nabla v)}_{L^2_t L^2_x}^2 \right] \eqsim \mathbb{E} \left[ \norm{\Div S(\nabla u)}_{L^2_t L^2_x}^2 \right].
\end{align*}
\end{lemma}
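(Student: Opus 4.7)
The statement is essentially a deterministic elliptic regularity result applied pointwise in $(\omega,t)$, and the plan is to reduce the stochastic formulation to the cited deterministic theorems.

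First I would fix $(\omega,t) \in \Omega \times I$ outside a null set and observe that $v(\omega,t,\cdot) \in W^{1,p}_0(\mathcal{O})$ by the assumption $v \in L^p_\omega L^p_t W^{1,p}_{0,x}$, while $\Div S(\nabla v(\omega,t,\cdot)) \in L^2(\mathcal{O})$ by the assumption $\Div S(\nabla v) \in L^2_\omega L^2_t L^2_x$ (modulo a Fubini argument on the exceptional set). At such a fixed $(\omega,t)$ the problem is purely deterministic: $v(\omega,t,\cdot)$ solves $-\Div S(\nabla w) = f$ in $\mathcal{O}$ with zero boundary data and right-hand side $f := -\Div S(\nabla v(\omega,t,\cdot)) \in L^2(\mathcal{O})$.

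In the convex case I would invoke \cite{Balci2021} Theorem 2.6 directly, which, for $p > p_c = 2(2-\sqrt{2})$, supplies the pointwise estimate
\begin{align*}
\norm{\nabla S(\nabla v(\omega,t,\cdot))}_{L^2_x}^2 \eqsim \norm{\Div S(\nabla v(\omega,t,\cdot))}_{L^2_x}^2,
\end{align*}
with an implicit constant depending only on $p$, $n$, $N$, $\kappa$ and $\mathcal{O}$, but crucially not on $(\omega,t)$. In the Lipschitz case with $\partial \mathcal{O} \in W^{2,1}$ and $\lim_{r\to 0}\mathcal{K}_\mathcal{O}(r) \leq c$, I would use the analogous Cianchi--Maz'ya estimate from \cite{MR4030249} (with the sharper version in \cite{Balci2021}); the role of the curvature condition is exactly to control the boundary integral that arises from integration by parts against $\Div S(\nabla v)$, so the constant in the resulting equivalence is again deterministic and uniform.

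Given the uniform pointwise equivalence, I would conclude by integrating over $I$ and taking expectation, so that
\begin{align*}
\mathbb{E}\left[\norm{\nabla S(\nabla v)}_{L^2_t L^2_x}^2\right] \eqsim \mathbb{E}\left[\int_0^T \norm{\Div S(\nabla v(\omega,t,\cdot))}_{L^2_x}^2 \dt\right] = \mathbb{E}\left[\norm{\Div S(\nabla v)}_{L^2_t L^2_x}^2\right],
\end{align*}
which yields both the claimed integrability of $\nabla S(\nabla v)$ and the norm equivalence.

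The only genuine technical point, rather than an obstacle, is measurability: one must check that $(\omega,t)\mapsto \nabla S(\nabla v(\omega,t,\cdot))$ is jointly measurable with values in $L^2(\mathcal{O})$. This follows because $v$ is progressively measurable into $W^{1,p}_{0,x}$, $S$ acts continuously as a Nemytskii operator from $L^p_x$ into $L^{p'}_x$ at the gradient level, and the deterministic regularity theorem identifies $\nabla S(\nabla v)$ as the unique function in $L^2_x$ satisfying the pointwise identity. With this measurability in hand, the Fubini argument above is justified and the lemma follows. No genuinely new estimate has to be produced; the entire content is the quotation of the two deterministic results in the form stated and a routine application of Fubini.
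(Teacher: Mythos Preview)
Your proposal is correct and follows essentially the same route as the paper: freeze $(\omega,t)$, regard $v(\omega,t,\cdot)$ as a solution of the tautological elliptic problem with right-hand side $\Div S(\nabla v)\in L^2_x$, invoke the deterministic regularity theorems, and then integrate in $(\omega,t)$. The only difference is bibliographic---the paper cites Theorems~2.3 and~2.4 of \cite{MR4030249} for both geometric settings rather than splitting between \cite{Balci2021} and \cite{MR4030249}---and your added remarks on measurability and Fubini are more detailed than what the paper records, but not a different argument.
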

\com{
\begin{proof}
First observe that $v$ trivially solves for almost all $(\omega,t) $
\begin{alignat*}{2}
\Div S(\nabla v) &= \Div S(\nabla v) \quad &&\text{ in } \mathcal{O},\\
v &= 0 \quad &&\text{ on } \partial \mathcal{O}.
\end{alignat*}
The result now follows by Theorem~2.3 respectively Theorem~2.4 in \cite{MR4030249}.
\end{proof}
}

This allows to establish global gradient regularity for $V(\nabla u)$ at least in the non-degenerate setting $\kappa > 0$.
\begin{corollary}\label{cor:nabla-V-reg}
Let the assumptions of Theorem~\ref{thm:strong-sol} be satisfied. Denote by $u$ the unique strong solution to~\eqref{eq:p-Laplace}. Then
\begin{align}\label{eq:nabla-V-reg}
\mathbb{E}\left[ \norm{\nabla V(\nabla u)}_{L^2_t L^2_x}^2 \right] \lesssim \kappa^{2-p} \left(\mathbb{E} \left[ \norm{u_0}_{W^{1,p}_x}^p \right] + 1 \right).
\end{align}
\end{corollary}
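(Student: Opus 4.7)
The strategy is to chain together three ingredients: a pointwise algebraic comparison between $\nabla V(\nabla u)$ and $\nabla S(\nabla u)$ valid in the non-degenerate regime $\kappa > 0$, the global second-order estimate of Lemma~\ref{lem:grad-S}, and the a priori bound of Theorem~\ref{thm:strong-sol}. The hypotheses of Theorem~\ref{thm:strong-sol} include that $\mathcal{O}$ is bounded and convex, which is precisely case (1) of Lemma~\ref{lem:grad-S}, and $p \geq 2 > p_c = 2(2-\sqrt{2})$, so both auxiliary results apply simultaneously to the unique strong solution~$u$.

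The heart of the matter is the pointwise bound
\begin{align*}
\abs{\nabla V(\nabla u)}^2 \lesssim \kappa^{2-p}\, \abs{\nabla S(\nabla u)}^2,
\end{align*}
which I would derive from the classical chain-rule identities for $V$ and $S$ (see \cite{DieE08})
\begin{align*}
\abs{\nabla V(\nabla u)}^2 \eqsim (\kappa + \abs{\nabla u})^{p-2} \abs{\nabla^2 u}^2, \qquad \abs{\nabla S(\nabla u)}^2 \eqsim (\kappa + \abs{\nabla u})^{2(p-2)} \abs{\nabla^2 u}^2.
\end{align*}
The ratio of the two weights is $(\kappa + \abs{\nabla u})^{-(p-2)}$, which is dominated by $\kappa^{2-p}$ for $p\ge 2$ and $\kappa > 0$. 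Integrating over $\mathcal{O}_T$ and taking expectations, I would then invoke Lemma~\ref{lem:grad-S} to replace $\mathbb{E}[\norm{\nabla S(\nabla u)}_{L^2_t L^2_x}^2]$ by $\mathbb{E}[\norm{\Div S(\nabla u)}_{L^2_t L^2_x}^2]$, and close the chain with the estimate~\eqref{eq:est-strong} from Theorem~\ref{thm:strong-sol}, which controls the latter by $\mathbb{E}[\norm{u_0}_{W^{1,p}_x}^p] + 1$.

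The main subtlety is that the pointwise identities displayed above formally involve $\nabla^2 u$, while the strong-solution framework only provides $\nabla S(\nabla u) \in L^2_\omega L^2_t L^2_x$ through Lemma~\ref{lem:grad-S}. To make the calculation rigorous, I would carry out the computation first on the Galerkin approximants $u_J$ from the proof sketch of Theorem~\ref{thm:strong-sol}, where all derivatives are classical; this yields a uniform estimate $\norm{\nabla V(\nabla u_J)}_{L^2_t L^2_x}^2 \lesssim \kappa^{2-p} \norm{\nabla S(\nabla u_J)}_{L^2_t L^2_x}^2$, which passes to the limit $J\to\infty$ by weak lower semicontinuity. Alternatively, one can interpret the left-hand side through difference quotients and invoke the $V$-coercivity of Lemma~\ref{lem:V-coercive} to compare $\tau_h V(\nabla u)$ with $\tau_h S(\nabla u)$ before taking $h\to 0$. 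Either way, the factor $\kappa^{2-p}$ is the expected price for degenerating the weight, and it is this factor that prevents the argument from extending to the degenerate case $\kappa = 0$.
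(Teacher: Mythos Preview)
Your proposal is correct and follows essentially the same route as the paper: both arguments rest on the chain-rule identities $\abs{\nabla V(\nabla u)}^2 \eqsim (\kappa+\abs{\nabla u})^{p-2}\abs{\nabla^2 u}^2$ and $\abs{\nabla S(\nabla u)}^2 \eqsim (\kappa+\abs{\nabla u})^{2(p-2)}\abs{\nabla^2 u}^2$, extract the factor $\kappa^{2-p}$, and then invoke Lemma~\ref{lem:grad-S} together with the a~priori bound~\eqref{eq:est-strong}. The only cosmetic difference is that the paper passes through $\abs{\nabla V(\nabla u)}^2 \eqsim \nabla S(\nabla u):\nabla^2 u$ and applies H\"older's inequality before bounding $\abs{\nabla^2 u}^2 \leq \kappa^{2(2-p)}\abs{\nabla S(\nabla u)}^2$, whereas your direct pointwise comparison of the two weights is slightly more economical and yields the same inequality in one step.
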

\begin{proof}
We compute the derivatives
\begin{align*}
\nabla S(\nabla u) &= \left(\kappa + \abs{\nabla u} \right)^{p-2} \left( \nabla^2 u + (p-2) \frac{\nabla^2 u \nabla u \otimes \nabla u}{\left( \kappa + \abs{\nabla u}\right)\abs{\nabla u}} \right),\\
\nabla V(\nabla u) &= \left(\kappa + \abs{\nabla u} \right)^\frac{p-2}{2} \left( \nabla^2 u + \frac{p-2}{2} \frac{\nabla^2 u \nabla u \otimes \nabla u}{\left( \kappa + \abs{\nabla u}\right)\abs{\nabla u}} \right).
\end{align*}
In particular,
\begin{align*}
\abs{\nabla V(\nabla u)}^2 &\eqsim \left(\kappa + \abs{\nabla u} \right)^{p-2} \abs{\nabla^2 u}^2 \eqsim \nabla S(\nabla u) : \nabla^2 u,\\
\abs{\nabla S(\nabla u)}^2 &\eqsim \left(\kappa + \abs{\nabla u} \right)^{2(p-2)} \abs{\nabla^2 u}^2.
\end{align*}
H\"older's inequality now implies
\begin{align*}
\mathbb{E}\left[\norm{\nabla V(\nabla u)}_{L^2_t L^2_x}^2 \right] \lesssim \left( \mathbb{E}\left[\norm{\nabla S(\nabla u)}_{L^2_t L^2_x}^2 \right]\mathbb{E}\left[\norm{\nabla^2 u}_{L^2_t L^2_x}^2 \right] \right)^\frac{1}{2}
\end{align*}
Due to the non-degeneracy we have
\begin{align*}
\mathbb{E}\left[\norm{\nabla^2 u}_{L^2_t L^2_x}^2 \right] &= \mathbb{E}\left[ \int_0^T \int_\mathcal{O} \frac{\left( \kappa + \abs{\nabla u} \right)^{2(p-2)}}{\left( \kappa + \abs{\nabla u} \right)^{2(p-2)}} \abs{\nabla^2 u}^2 \dx \dt \right] \\
&\lesssim \kappa^{2(2-p)}\mathbb{E}\left[\norm{\nabla S(\nabla u)}_{L^2_t L^2_x}^2 \right].
\end{align*}
Since we assume that $p \geq 2 > p_c$ and $\mathcal{O}$ is bounded and convex, we can apply Lemma~\ref{lem:grad-S}. The estimate~\eqref{eq:nabla-V-reg} now follows by the a priori estimate of Theorem~\ref{thm:strong-sol}.
\end{proof}

\begin{remark}
Naturally one obtains regularity for $\nabla V(\nabla u)$ when testing \eqref{eq:p-Laplace} by the Laplacian, as presented by Breit in \cite{Breit2015Regularity}, \com{rather then the $p$-Laplacian. } In this way one can establish uniform bounds in $\kappa$ at least locally in space. For global estimates the geometry of $\mathcal{O}$ is the crucial ingredient.
\end{remark}

\section{Time regularity for strong solutions}\label{sec:Time_reg_Main}
The next paragraph is devoted to results on the time regularity of strong solutions to \eqref{eq:p-Laplace}. Especially important are the mapping properties of the stochastic integral operator in type $2$ Banach spaces. The regularity of the stochastic integral also improves when the regularity of the integrand improves. This is a key ingredient when deriving time regularity of solutions to SPDEs as presented by Ondrej\'at and Veraar in \cite{MR4116708}. 
\subsection{Stability of stochastic integrals} \label{sec:Stability_of_stochastic_integrals}
First we need a regularity statement for conditional expectations of stochastic integrals.
\begin{lemma}[\cite{MR4116708} Lemma 3.1] \label{lem:reg-cond-exp}
Let $\left( E, \norm{\cdot}_E \right)$ be a separable type $2$ Banach space. Let $T>0$, $q\in [1,\infty)$ and $r \in (2,\infty]$. Let $G \in L^q_\mathcal{F} L^r_t \gamma(U;E)$. Then there exists a constant $C>0$ such that for all $0 \leq s  \leq t \leq T$ and $\mathbb{P}$-a.s.
\begin{align} \label{eq:cond-exp}
\left( \mathbb{E}\left[\norm{\mathcal{I}(G)(t) - \mathcal{I}(G)(s)}_{E}^q | \mathcal{F}_s \right] \right)^\frac{1}{q} \leq C \sqrt{q} \norm{G}_{L^q_\omega L^r_t \gamma(U;E)} (t-s)^{\frac{1}{2} - \frac{1}{r}}.
\end{align}
\end{lemma}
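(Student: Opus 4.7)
The plan is to combine the Burkholder--Davis--Gundy inequality for $E$-valued stochastic integrals on a type $2$ space, with its sharp $\sqrt{q}$ constant, together with H\"older's inequality in time.

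First, fix $0 \leq s \leq t \leq T$ and write the increment as $\mathcal{I}(G)(t) - \mathcal{I}(G)(s) = \int_s^t G(\sigma) \dd W_\sigma$. Viewing $r \mapsto \int_s^r G \dd W$ as a continuous $E$-valued martingale under the shifted filtration $(\mathcal{F}_r)_{r \geq s}$, a conditional BDG inequality in the sense of Brze\'{z}niak \cite{MR1313905} and Van Neerven--Weis \cite{MR2109586} applies. Since $E$ has type $2$, for $q \geq 2$ this yields
\begin{align*}
\mathbb{E}\bigl[\norm{\mathcal{I}(G)(t) - \mathcal{I}(G)(s)}_E^q \,\big|\, \mathcal{F}_s\bigr]^{\frac{1}{q}} \lesssim \sqrt{q}\, \mathbb{E}\bigl[\norm{G}_{L^2(s,t;\gamma(U;E))}^q \,\big|\, \mathcal{F}_s\bigr]^{\frac{1}{q}},
\end{align*}
where the sharp $\sqrt{q}$ constant originates from the Kahane--Khintchine inequalities combined with the type $2$ property of $E$. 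The case $q \in [1,2)$ is then reduced to $q=2$ by conditional Jensen.

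Second, since $r > 2$, H\"older's inequality in time on $[s,t]$ with exponents $r/2$ and $(r/2)'$ yields pointwise in $\omega$
\begin{align*}
\norm{G(\omega,\cdot)}_{L^2(s,t;\gamma(U;E))} \leq (t-s)^{\frac{1}{2}-\frac{1}{r}} \norm{G(\omega,\cdot)}_{L^r(s,t;\gamma(U;E))},
\end{align*}
with the usual $r=\infty$ interpretation giving $(t-s)^{1/2}$. Chaining the two displays, enlarging the time interval from $(s,t)$ to $(0,T)$, and extracting the $L^q(\Omega)$-norm via the tower property of conditional expectations to pass from the random conditional $L^r$-norm of $G$ to the deterministic quantity $\norm{G}_{L^q_\omega L^r_t \gamma(U;E)}$ closes the estimate.

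The main obstacle is securing the sharp $\sqrt{q}$ dependence in the type $2$ BDG inequality. For Hilbert-valued integrands this is classical via It\^o's isometry and scalar BDG, but on an abstract type $2$ Banach space one must invoke the $\gamma$-radonifying characterization of stochastic integrability together with Pisier-type decoupling. This sharp constant is crucial for the subsequent Besov--Orlicz bound on $u$, where summation of contributions of order $(\sqrt{q})^q$ in $q$ is precisely what produces the Orlicz function $\Phi_2(t)=e^{t^2}-1$.
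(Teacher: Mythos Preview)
The paper does not give its own proof of this lemma; it is cited from \cite{MR4116708}, Lemma~3.1, with no argument supplied. There is therefore nothing in the present paper to compare your sketch against.

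On its own merits: your first two steps---a conditional BDG inequality in a type~$2$ space with the sharp $\sqrt{q}$ constant, then H\"older in time to extract $(t-s)^{1/2-1/r}$---are the natural route and are presumably what is done in \cite{MR4116708}. The final sentence, though, is a genuine gap. After those two steps the right-hand side still carries the \emph{random} quantity
\[
\Bigl(\mathbb{E}\bigl[\norm{G}_{L^r(0,T;\gamma(U;E))}^q \,\big|\, \mathcal{F}_s\bigr]\Bigr)^{1/q},
\]
and the tower property does not bound this $\mathbb{P}$-a.s.\ by the deterministic number $\norm{G}_{L^q_\omega L^r_t \gamma(U;E)}$: the tower property only equates full expectations and gives no pointwise inequality. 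Indeed, if $G$ happens to be $\mathcal{F}_s$-measurable the conditional expectation reduces to $\norm{G(\omega)}_{L^r_t}^q$, which is certainly not dominated by its mean on all of~$\Omega$. Your argument therefore establishes \eqref{eq:cond-exp} only with the conditional quantity $\bigl(\mathbb{E}[\norm{G}_{L^r_t \gamma(U;E)}^q\,|\,\mathcal{F}_s]\bigr)^{1/q}$ on the right; whether the literal statement with the deterministic $L^q_\omega$-norm is intended (it fails on the example just described) or is a transcription artifact from \cite{MR4116708} is a separate matter.
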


\begin{lemma}\label{lem:time-reg-integral} Let $\left(E,\norm{\cdot}_E \right)$ be a separable Banach space of type $2$. Then there exists a constant $C >0$ such that for all $q \in \com{[1},\infty)$, $r \in (2,\infty]$\com{, $\theta \in (1,2]$ and $G \in L^{q\theta}_\mathcal{F} L^r_t \gamma(U;E)$ } we have 
\begin{align} \label{eq:Besov-reg}
\left( \mathbb{E}\left[ \norm{\mathcal{I}(G)}_{B_{q,\infty}^\alpha(0,T;E)}^{ q\theta } \right] \right)^\frac{1}{q\theta} \leq C \com{ (\theta-1)^{-\frac{1}{q\theta}} \sqrt{q} } \norm{G}_{L_\omega^{ q \theta} L^r_t \gamma(U;E)},
\end{align}
where $\alpha = \frac{1}{2} - \frac{1}{r}$.
\end{lemma}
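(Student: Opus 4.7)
The plan is to derive the Besov--Nikolskii moment estimate from the conditional increment bound of Lemma~\ref{lem:reg-cond-exp}, applied at the elevated exponent $q\theta$, via Minkowski's inequality in $s$ and a sup-to-integral conversion in $h$ that exploits the gap $\theta>1$.

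First I would split the Besov--Orlicz norm into its $L^q$-part and its seminorm; the $L^q$-part is controlled by arguments analogous to Proposition~\ref{prop:ConstructionStochIntegral} in the type-$2$ space $E$, so the focus is on the seminorm
\[
[\mathcal{I}(G)]_{B^\alpha_{q,\infty}}
\;=\;
\sup_{h\in(0,T)}\,h^{-\alpha}\,\norm{\tau_h\mathcal{I}(G)}_{L^q((0,T-h);E)}.
\]
For each fixed $(s,h)$, applying Lemma~\ref{lem:reg-cond-exp} with $q\theta$ in place of $q$ and taking unconditional expectation yields
\[
\mathbb{E}\bigl[\norm{\tau_h\mathcal{I}(G)(s)}_E^{q\theta}\bigr]^{\frac{1}{q\theta}}
\;\leq\;
C\sqrt{q\theta}\,h^{\alpha}\,\norm{G}_{L^{q\theta}_\omega L^r_t\gamma(U;E)}.
\]
Since $\theta\geq 1$, Minkowski's integral inequality moves the $L^\theta(\Omega)$-norm inside the $s$-integral and produces the pointwise-in-$h$ moment estimate
\[
\mathbb{E}\!\left[h^{-\alpha q\theta}\norm{\tau_h\mathcal{I}(G)}_{L^q((0,T-h);E)}^{q\theta}\right]
\;\leq\;
T^{\theta}\,(C\sqrt{q\theta})^{q\theta}\,\norm{G}_{L^{q\theta}_\omega L^r_t\gamma}^{q\theta}.
\]

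The technical heart of the proof is the final upgrade from this pointwise-in-$h$ bound to the $L^{q\theta}_\omega L^\infty_h$-estimate demanded by the Nikolskii seminorm. The idea is to trade the supremum over $h$ for a weighted integral with a singular weight of the form $h^{-1+(\theta-1)\eta}$, whose integral over $(0,T)$ is of order $(\theta-1)^{-1}$. To make this rigorous one needs, in addition to the pointwise bound, an auxiliary H\"older-in-$h$ continuity estimate for $h\mapsto\tau_h\mathcal{I}(G)$ (again a consequence of Lemma~\ref{lem:reg-cond-exp}, applied to differences $\tau_{h_1}\mathcal{I}(G)-\tau_{h_2}\mathcal{I}(G)$ after observing $\tau_{h_1}u-\tau_{h_2}u(s)=\tau_{h_2-h_1}u(s+h_1)$), combined with a Kolmogorov--Chentsov / Fubini argument. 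This conversion is essentially the content of the stochastic-integral stability result of Ondrej\'at--Veraar~\cite{MR4116708}.

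The main obstacle is precisely this sup-to-integral step: the regularity $\alpha=\tfrac12-\tfrac1r$ is sharp and cannot be sacrificed, so the weight producing the $(\theta-1)^{-1}$ blow-up must be tuned delicately against the available H\"older exponent in $h$. After extracting the $\tfrac{1}{q\theta}$-th root of the singular integral one obtains exactly the prefactor $(\theta-1)^{-1/(q\theta)}$ in the claimed estimate, while the constant $\sqrt{q\theta}$ from the conditional bound is absorbed into $\sqrt{q}$ since $\theta\in(1,2]$.
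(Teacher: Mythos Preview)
Your proposal has a genuine gap at the crucial step: the passage from the pointwise-in-$h$ moment bound
\[
\sup_{h>0}\mathbb{E}\!\left[h^{-\alpha q\theta}\norm{\tau_h\mathcal{I}(G)}_{L^q_t E}^{q\theta}\right]\leq M
\quad\text{to}\quad
\mathbb{E}\!\left[\sup_{h>0}h^{-\alpha q\theta}\norm{\tau_h\mathcal{I}(G)}_{L^q_t E}^{q\theta}\right]\lesssim M.
\]
You propose to handle this by a Kolmogorov--Chentsov/GRR argument on the map $h\mapsto\tau_h\mathcal{I}(G)$, using the identity $\tau_{h_1}u-\tau_{h_2}u=\tau_{h_2-h_1}u(\cdot+h_1)$. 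But this identity shows that the H\"older regularity of $h\mapsto\tau_h\mathcal{I}(G)$ is \emph{the same} as that of $s\mapsto\mathcal{I}(G)(s)$, namely of order $\alpha=\tfrac12-\tfrac1r$, with no extra room. A Kolmogorov-type argument would need an increment exponent strictly bigger than $1$ after raising to the $q\theta$-th power, which is not available for general $q\in[1,\infty)$ and $r\in(2,\infty]$. Your own remark that ``$\alpha$ is sharp and cannot be sacrificed'' identifies exactly why this route stalls; the sentence ``tune the weight delicately'' does not resolve it.

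The paper's proof avoids this obstacle by a completely different mechanism. It works on the dyadic scale $h=2^{-n}$, writes
\[
Y_{n,q}^q=2^{-n}\sum_{m=1}^{2^n-1}\int_0^1\eta_{n,m,s}\,ds,
\qquad
\zeta_{n,m,s}=\mathbb{E}\bigl[\eta_{n,m,s}\,\big|\,\mathcal{F}_{(s+m-1)2^{-n}}\bigr],
\]
and observes that $M\mapsto\sum_{m\le M}(\eta_{n,m,s}-\zeta_{n,m,s})$ is a discrete \emph{martingale}. The BDG inequality together with $\ell^\theta\hookrightarrow\ell^2$ then controls $\mathbb{E}\bigl[|Y_{n,q}^q-Z_{n,q}^q|^\theta\bigr]$ by a quantity that, after summation over all scales $n$, produces the geometric series $\sum_n 2^{-n(\theta-1)}\eqsim(\theta-1)^{-1}$. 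This is where the factor $(\theta-1)^{-1/(q\theta)}$ actually originates. The conditioned part $Z_{n,q}$ is bounded \emph{uniformly in $n$} and $\omega$-pointwise by Lemma~\ref{lem:reg-cond-exp}, so no Kolmogorov argument is needed there either. In short, the missing idea in your outline is this dyadic martingale decomposition; once you have it, the supremum over $h$ is handled by $\ell^1\hookrightarrow\ell^\infty$ on the martingale part, not by a continuity criterion.
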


The subsequent proof is motivated by the proof of Theorem 3.2 in \cite{MR4116708}.
\begin{proof}
Without loss of generality we assume $T=1$, the general case follows by a scaling argument. Define 
\begin{align*}
Y_{n,q} := 2^{n\alpha} \norm{\mathcal{I}(G)(\cdot + 2^{-n}) - \mathcal{I}(G)(\cdot)}_{L^q(I \cap I-2^{-n}); E)}.
\end{align*}
Note, we may rewrite
\begin{align*}
Y_{n,q}^q &= \int_0^{1-2^{-n}} 2^{n\alpha q} \norm{\mathcal{I}(G)(t + 2^{-n}) -\mathcal{I}(G)(t)}_{ E}^q \dt  \\
&= \sum_{m=1}^{2^n-1} \int_{(m-1)2^{-n}}^{m2^{-n}} 2^{n\alpha q} \norm{\mathcal{I}(G)(t + 2^{-n}) -\mathcal{I}(G)(t)}_{ E}^q  \dt \\
&=  \sum_{m=1}^{2^n-1} 2^{-n} \int_{0}^{1} 2^{n\alpha q} \norm{\mathcal{I}(G)((s+m)2^{-n}) -\mathcal{I}(G)((s+m-1)2^{-n})}_{ E}^q  \ds\\
&=: \int_{0}^{1} 2^{-n}\sum_{m=1}^{2^n-1}  \eta_{n,m,s}  \ds.
\end{align*}
Furthermore define
\begin{align*}
Z_{n,q}^q := \int_{0}^{1} 2^{-n}\sum_{m=1}^{2^n-1}  \zeta_{n,m,s}  \ds
\end{align*}
with $\zeta_{n,m,s} :=  \mathbb{E} \left[\eta_{n,m,s} | \mathcal{F}_{(s+m-1)2^{-n}} \right] $.
For \com{a } fixed $n \in \mathbb{N}$ and $s \in [0,1]$ we define for $M \in \set{1,\ldots,2^{n}-1}$
\begin{align*}
\mathcal{E}_{n,M,s}:= \sum_{m=1}^M \eta_{n,m,s} - \zeta_{n,m,s}.
\end{align*}
Observe $\mathcal{E}_{n,\cdot,s}$ is a discrete martingale with respect to the filtration $\mathcal{F}_{(s + \cdot)2^{-n}}$\com{. Indeed, for $k \in \set{1,\ldots, 2^n-1}$, we have
\begin{align*}
&\mathbb{E} \left[\mathcal{E}_{n,M,s} \big| \mathcal{F}_{(s+k)2^{-n}} \right] \\
&= \mathbb{E} \left[\sum_{m=1}^{k} \eta_{n,m,s} - \zeta_{n,m,s} \Bigg| \mathcal{F}_{(s+k)2^{-n}} \right] + \mathbb{E} \left[\sum_{m=k+1}^{M} \eta_{n,m,s} - \zeta_{n,m,s} \Bigg| \mathcal{F}_{(s+k)2^{-n}} \right] \\
&=: \mathrm{I} + \mathrm{II}. 
\end{align*}
Recall $\mathcal{I}(G)$ is adapted to $(\mathcal{F}_t)$. Therefore $\eta_{n,m,s}$ is $\mathcal{F}_{(s+k)2^{-n}}$-measurable for all $m \leq k$. Now, due to measurability, we conclude
\begin{align*}
\mathrm{I} = \mathcal{E}_{n,k,s}.
\end{align*}
The tower property of conditional expectation implies, for all $m \geq k+1$,
\begin{align*}
 \mathbb{E} \left[ \zeta_{n,m,s} \big| \mathcal{F}_{(s+k)2^{-n}} \right] &=  \mathbb{E} \left[ \mathbb{E} \left[\eta_{n,m,s} | \mathcal{F}_{(s+m-1)2^{-n}} \right]  \big| \mathcal{F}_{(s+k)2^{-n}} \right] \\
 &=   \mathbb{E} \left[\eta_{n,m,s} \big| \mathcal{F}_{(s+k)2^{-n}} \right].
\end{align*}
Thus $\mathrm{II} = 0.$
}

 Fix $\theta \in (1,2]$. We obtain, due to Jensen's inequality and Fubini's Theorem,
\begin{align*}
\mathbb{E} \left[ \abs{Y_{n,q}^q - Z_{n,q}^q}^\theta \right]& = \mathbb{E} \left[ \abs{\int_0^1 2^{-n} \sum_{m=1}^{2^n-1} \eta_{n,m,s} - \zeta_{n,m,s} \ds}^\theta \right]\\
&\leq  \int_0^1 2^{-\theta n} \mathbb{E} \left[ \abs{ \sum_{m=1}^{2^n-1} \eta_{n,m,s} - \zeta_{n,m,s}}^\theta \right]\ds \\
&=  \int_0^1 2^{-\theta n} \mathbb{E} \left[ \abs{ \mathcal{E}_{n,2^{n}-1,s}}^\theta \right]\ds \\
&\leq  \int_0^1 2^{-\theta n} \mathbb{E} \left[\max_{M\in \set{1,\ldots,2^n-1}} \abs{ \mathcal{E}_{n,M,s}}^\theta \right]\ds.
\end{align*}
Applying the classical BDG-inequality and $\ell^{\theta} \hookrightarrow \ell^2$ we get
\begin{align*}
\mathbb{E} \left[\max_{M\in \set{1,\ldots,2^n-1}} \abs{ \mathcal{E}_{n,M,s}}^\theta \right] &\lesssim \mathbb{E} \left[\left(\sum_{m=1}^{2^n -1} (\eta_{n,m,s} - \zeta_{n,m,s})^2 \right)^\frac{\theta}{2}\right] \\
&\leq  \mathbb{E} \left[\sum_{m=1}^{2^n-1} \abs{\eta_{n,m,s} - \zeta_{n,m,s}}^{\theta} \right] .
\end{align*}
Jensen's inequality implies
\begin{align*}
 \mathbb{E} \left[\sum_{m=1}^{2^n-1} \abs{\eta_{n,m,s} - \zeta_{n,m,s}}^{\theta}\right] \leq 2^\theta \sum_{m=1}^{2^n-1} \mathbb{E} \left[ \abs{\eta_{n,m,s}}^{\theta}\right].
\end{align*}
It remains to invoke the BDG-inequality in type $2$ Banach spaces and H\"older's inequality to conclude

\com{
\begin{align*}
\mathbb{E} \left[ \abs{\eta_{n,m,s}}^\theta \right] &\lesssim (\theta q)^\frac{\theta q}{2} \mathbb{E} \left[ 2^{n\alpha q \theta} \left( \int_{(s+m-1)2^{-n}}^{(s+m)2^{-n}} \norm{G}_{\gamma(U;E)}^2 \dd s \right)^\frac{q \theta}{2} \right] \\
&\leq (\theta q)^\frac{\theta q}{2} \mathbb{E}  \left[ 2^{n\alpha q \theta} \left( \int_{(s+m-1)2^{-n}}^{(s+m)2^{-n}} \norm{G}_{\gamma(U;E)}^r \dd s \right)^\frac{q \theta}{r} \left(2^{-n}  \right)^\frac{q\theta(r-2)}{2r} \right]  \\
&= (\theta q)^\frac{\theta q}{2} 2^{n q \theta(\alpha - (1/2 - 1/r))} \mathbb{E}  \left[  \left( \int_{(s+m-1)2^{-n}}^{(s+m)2^{-n}} \norm{G}_{\gamma(U;E)}^r \dd s \right)^\frac{q \theta}{r}  \right] 
\end{align*}
Recall that $\alpha = 1/2 - 1/r$. Collecting the previous estimates, we obtain
\begin{align*}
\sum_{m=1}^{2^n-1} \mathbb{E} \left[ \abs{\eta_{n,m,s}}^\theta \right]  \lesssim (\theta q)^\frac{\theta q}{2} 2^n \mathbb{E}  \left[  \left( \int_{0}^{1} \norm{G}_{\gamma(U;E)}^r \dd s \right)^\frac{q \theta}{r}  \right].
\end{align*}
Now, estimating the geometric sum 
\begin{align*}
\mathbb{E} \left[ \sup_{n \in \mathbb{N}} \abs{Y_{n,q}^q - Z_{n,q}^q}^\theta \right] &\leq \mathbb{E} \left[ \sum_{n \in \mathbb{N}} \abs{Y_{n,q}^q - Z_{n,q}^q}^\theta \right] \\
&\lesssim 2^{\theta} (\theta q)^\frac{\theta q}{2}  \mathbb{E}  \left[  \left( \int_{0}^{1} \norm{G}_{\gamma(U;E)}^r \dd s \right)^\frac{q \theta}{r}  \right] \sum_{n \in \mathbb{N}} 2^{-n(\theta -1)} \\
&\leq 2^{\theta} (\theta q)^\frac{\theta q}{2}  \mathbb{E}  \left[  \left( \int_{0}^{1} \norm{G}_{\gamma(U;E)}^r \dd s \right)^\frac{q \theta}{r}  \right] \frac{1}{1 - 2^{-(\theta - 1)}}.
\end{align*}
Note, for $\theta \geq 1$,
\begin{align*}
1-2^{-(\theta-1)} \geq 2^{-(\theta-1)}\ln(2) (\theta - 1).
\end{align*}
Thus,
\begin{align}
\mathbb{E} \left[ \sup_{n \in \mathbb{N}} \abs{Y_{n,q}^q - Z_{n,q}^q}^\theta \right] \lesssim \frac{2^{2\theta-1} (\theta q)^\frac{\theta q}{2}  }{\ln(2)(\theta -1)}\mathbb{E}  \left[  \left( \int_{0}^{1} \norm{G}_{\gamma(U;E)}^r \dd s \right)^\frac{q \theta}{r}  \right].
\end{align}

Lastly we estimate $Z_{n,q}$. Due to Lemma \ref{lem:reg-cond-exp} it holds $\mathbb{P}$-a.s.
\begin{align*}
Z_{n,q}^q \leq \int_0^1 \max_{1\leq m \leq 2^n-1} \zeta_{n,m,s} \ds \lesssim  q^\frac{q}{2} 2^{nq(\alpha - (1/2 - 1/r))} \norm{G}_{L^q_\omega L^r_t \gamma(U;E)}^q .
\end{align*}
Take the $\theta$-th power, supremum in $n$ and expectation
\begin{align*}
\mathbb{E} \left[ \sup_{n \in \mathbb{N}} \abs{Z_{n,q}^q}^\theta \right] \lesssim q^\frac{q\theta}{2}\norm{G}_{L^q_\omega L^r_t \gamma(U;E)}^{ q\theta } \leq q^\frac{q\theta}{2}\norm{G}_{L^{q\theta}_\omega L^r_t \gamma(U;E)}^{ q\theta } .
\end{align*}
The semi-norm esimate now follows, since
\begin{align*}
\mathbb{E} \left[ \seminorm{\mathcal{I}(G)}_{B_{q,\infty}^{\alpha}(0,1;E)}^{q\theta} \right] &= \mathbb{E} \left[ \sup_{n \in \mathbb{N}} \abs{Y_{n,q}^q}^\theta \right] \\
&\leq 2^\theta \left( \mathbb{E} \left[ \sup_{n \in \mathbb{N}} \abs{Y_{n,q}^q - Z_{n,q}^q}^\theta \right] + \mathbb{E} \left[ \sup_{n \in \mathbb{N}} \abs{Z_{n,q}^q}^\theta \right] \right) \\
&\lesssim 2^\theta \left( \frac{2^{2\theta-1} \theta^\frac{\theta q}{2}  }{\ln(2)(\theta -1)} + 1 \right) q^\frac{q\theta}{2}\norm{G}_{L^{ q\theta}_\omega L^r_t \gamma(U;E)}^{ q \theta}.
\end{align*}
Applying the BDG-inequality in type $2$ Banach spaces and H\"older's inequality 
\begin{align*}
\mathbb{E} \left[ \norm{\mathcal{I}(G)}_{L^q(0,1;E)}^{q\theta} \right] &\lesssim (q \theta)^\frac{q \theta}{2} \mathbb{E}\left[ \left( \int_0^1 \norm{G}_{\gamma(U;E)}^2 \ds \right)^\frac{q\theta}{2} \right] \\
&\leq (q \theta)^\frac{q \theta}{2} \norm{G}_{L^{q\theta}_\omega L^r_t \gamma(U;E)}^{q\theta}.
\end{align*}
Overall we have proved
\begin{align*}
\mathbb{E} \left[ \norm{\mathcal{I}(G)}_{B^\alpha_{q,\infty}(0,1;E)}^{ q \theta} \right] &\leq 2^{\theta} \left(  \mathbb{E} \left[ \norm{\mathcal{I}(G)}_{L^{ q}(0,1;E)}^{q\theta} \right] + \mathbb{E} \left[ \seminorm{\mathcal{I}(G)}_{B_{q,\infty}^{\alpha}(0,1;E)}^{q\theta} \right] \right) \\
&\lesssim 2^\theta \left( \frac{2^{2\theta-1} \theta^\frac{\theta q}{2}  }{\ln(2)(\theta -1)} +\theta^\frac{\theta q}{2}  +1 \right)q^\frac{q\theta}{2}\norm{G}_{L^{ q\theta}_\omega L^r_t \gamma(U;E)}^{ q\theta}\\
&\leq \frac{1}{\theta-1} 2^\theta\left( \frac{2^{2\theta-1} \theta^\frac{\theta q}{2}  }{\ln(2)} +\theta^\frac{\theta q}{2}  +1 \right)q^\frac{q\theta}{2}\norm{G}_{L^{ q\theta}_\omega L^r_t \gamma(U;E)}^{ q\theta}.
\end{align*}
The assertion follows by taking the $q\theta$-th root.
}

\end{proof}
\begin{remark}
The statements for the time regularity in the critical regime $\alpha = 1/2 - 1/r$ are delicate. For $\alpha < 1/2-1/r$ one easily proves
\begin{align*}
\left( \mathbb{E}\left[ \norm{\mathcal{I}(G)}_{B_{q,\infty}^\alpha(0,T;E)}^{q} \right] \right)^\frac{1}{q} \lesssim \norm{G}_{L^{q}_\omega L^r_t \gamma(U;E)},
\end{align*}
using $\ell^1 \hookrightarrow \ell^\infty$ and interchanging integration and summation.
In \cite{MR4116708} Theorem~3.2(ii) the authors prove a slightly weaker estimate
\begin{align*}
\left( \mathbb{E}\left[ \norm{\mathcal{I}(G)}_{B_{q,\infty}^\alpha(0,T;E)}^{2q} \right] \right)^\frac{1}{2q} \leq C_T \sqrt{q} \norm{G}_{L^{2q}_\omega L^r_t \gamma(U;E)}.
\end{align*}
The result can be recovered by our Lemma \ref{lem:time-reg-integral} by setting $\theta = 2$ in the proof. We do not know whether the limit case $\theta = 1$ can be proved by a direct estimate on 
\begin{align*}
\mathbb{E} \left[ \sup_{n \in \mathbb{N}} \abs{Y_{n,p}^p - Z_{n,p}^p} \right].
\end{align*}
\end{remark}

\begin{corollary} \label{cor:stoch-int}
Let $p \geq 2$ and Assumption \ref{ass:Noise} be satisfied. Then for all $q \in \com{[1, }\infty)$ \com{and $\theta \in (1,2]$ }
\begin{align} \label{eq:cor-stoch}
\left( \mathbb{E}\left[ \norm{\mathcal{I}(G(v))}_{B_{q,\infty}^{1/2}(0,T;W^{1,p}_x)}^{q\com{\theta}} \right] \right)^\frac{1}{q\com{\theta}} \lesssim  \norm{v}_{L_\omega^{q\com{\theta}} L^\infty_t W^{1,p}_x} + 1.
\end{align}
\end{corollary}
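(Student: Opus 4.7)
The plan is to invoke Lemma~\ref{lem:time-reg-integral} with the specific choices $E = W^{1,p}_{0,x}$ and $r = \infty$, and then to bound the resulting $\gamma$-radonifying norm of $G(v)$ by the $W^{1,p}_{0,x}$-norm of $v$ via Lemma~\ref{lem:G-gamma}. No additional machinery is required.

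First, I would verify that $W^{1,p}_{0,x}$ is a separable Banach space of type $2$. Separability is standard. For the type $2$ property, I would note that $W^{1,p}_{0,x}$ embeds isometrically into a finite product of $L^p$-spaces via $v \mapsto (v, \partial_1 v, \ldots, \partial_n v)$, and by Proposition~\ref{prop:type2} every $L^p$-space with $p \in [2,\infty)$ is of type $2$; type $2$ passes to finite products and to closed subspaces, so $W^{1,p}_{0,x}$ is of type $2$ with a constant depending only on $p$ and $n$.

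Next, choosing $r = \infty$ in Lemma~\ref{lem:time-reg-integral} yields $\alpha = 1/2 - 1/r = 1/2$, which matches the differentiability index on the left-hand side of \eqref{eq:cor-stoch}. Writing $G = G(v)$ we obtain
\begin{align*}
\left( \mathbb{E}\bigl[ \norm{\mathcal{I}(G(v))}_{B^{1/2}_{q,\infty}(0,T;W^{1,p}_x)}^{q\theta} \bigr] \right)^{\frac{1}{q\theta}} \lesssim \norm{G(v)}_{L^{q\theta}_\omega L^\infty_t \gamma(U;W^{1,p}_x)},
\end{align*}
with a constant depending on $q$, $\theta$, $T$ and the type-$2$ constant of $W^{1,p}_{0,x}$ (all finite for the allowed parameter range).

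Finally, I apply Lemma~\ref{lem:G-gamma}, whose proof is actually pointwise in $(\omega,t)$: it shows $\norm{G(v)(\omega,t)}_{\gamma(U;W^{1,p}_x)} \lesssim \norm{v(\omega,t)}_{W^{1,p}_x} + 1$ for every $(\omega,t)$. Taking the essential supremum in $t$ and then the $L^{q\theta}(\Omega)$-norm gives
\begin{align*}
\norm{G(v)}_{L^{q\theta}_\omega L^\infty_t \gamma(U;W^{1,p}_x)} \lesssim \norm{v}_{L^{q\theta}_\omega L^\infty_t W^{1,p}_x} + 1,
\end{align*}
and chaining the two estimates produces \eqref{eq:cor-stoch}.

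The only mild subtlety I foresee is the justification that type~$2$ of $W^{1,p}_{0,x}$ follows from that of $L^p$; if one prefers to avoid that embedding argument, one may instead apply Lemma~\ref{lem:time-reg-integral} componentwise in $E = L^p(\mathcal{O})$ to $\mathcal{I}(G(v))$ and to $\nabla \mathcal{I}(G(v)) = \mathcal{I}(\nabla G(v))$ (stochastic integration commutes with the closed operator $\nabla$) and then add the resulting bounds, using Lemma~\ref{lem:G-gamma} on each piece. Either route finishes the proof in a few lines.
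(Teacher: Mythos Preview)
Your proposal is correct and essentially identical to the paper's own proof: apply Lemma~\ref{lem:time-reg-integral} with $E = W^{1,p}_{0,x}$ and $r = \infty$, then use the pointwise bound from Lemma~\ref{lem:G-gamma}. You supply a little more justification for the type~$2$ property of $W^{1,p}_{0,x}$ than the paper does (it merely says ``Similar to Proposition~\ref{prop:type2} one checks\ldots''), but otherwise the arguments coincide.
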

\begin{proof}
We aim at applying Lemma \ref{lem:time-reg-integral} with $E = W^{1,p}_x$ and $r = \infty$. Similar to Proposition \ref{prop:type2} one checks that $W^{1,p}_x$ is a type $2$ Banach space. Now due to Lemma \ref{lem:time-reg-integral} and Lemma \ref{lem:G-gamma}
\begin{align*}
\left( \mathbb{E}\left[ \norm{\mathcal{I}(G(v))}_{B_{q,\infty}^{1/2}(0,T;W^{1,p}_x)}^{q\com{\theta}} \right] \right)^\frac{1}{q\com{\theta}} &\lesssim \norm{G(v)}_{L_\omega^{q\com{\theta}} L^\infty_t \gamma(U;W^{1,p}_x)} \\
&\lesssim \norm{v}_{L_\omega^{q\com{\theta}} L^\infty_t W^{1,p}_x} + 1.
\end{align*}
\end{proof}
Lemma~\ref{lem:time-reg-integral} and Corollary~\ref{cor:stoch-int} guarantee the stability of the stochastic integral operator $\mathcal{I}$ respectively the compositional operator $\mathcal{I} \circ G$ under a prescribed integrability in probability $L^q_\omega$. We observe that we lose a tiny bit of integrability in time, when prescribing the same integrability in probability of the input and the output space. 
However if we allow for different integrability properties in probability, one can even establish stability of the stochastic integral operator on exponentially integrable Besov spaces in time as done by Ondreját and Veraar in \cite{MR4116708}.
\begin{lemma}[\cite{MR4116708} Theorem 3.2 (vi)]\label{lem:time-reg-integral-2}
Let $\left(E,\norm{\cdot}_E \right)$ be a separable Banach space of type $2$ and $G \in L^q_\mathcal{F} L^r_t \gamma(U;E)$. Then there exists a constant $C >0$ such that for all $q \in \com{[1, }\infty)$ and $r \in (2,\infty]$ we have 
\begin{align} \label{eq:Besov-reg-2}
\left( \mathbb{E} \left[ \norm{\mathcal{I}(G)}_{B^{\alpha}_{\Phi_2,\infty}(0,T;E)}^q \right] \right)^\frac{1}{q} \leq C \sqrt{q} \norm{G}_{L^{N_q}(\Omega;L^r(0,T;\gamma(U;E)))},
\end{align}
where $\alpha = \frac{1}{2} - \frac{1}{r}$, $\Phi_2(t)= e^{t^2}-1$ and $N_q(t) = t^q \log^{q/2}(t+1)$.
\end{lemma}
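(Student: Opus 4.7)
The plan is to reduce to Lemma~\ref{lem:time-reg-integral} via the Taylor expansion $\Phi_2(t) = \sum_{m \geq 1} t^{2m}/m!$, which encodes the $L^{\Phi_2}$-norm as a supremum of $L^{2k}$-norms weighted by $\sqrt{k}$. A standard Stirling computation yields the one-sided equivalence
\begin{align*}
\norm{f}_{L^{\Phi_2}(J;E)} \lesssim \sup_{k\in \mathbb{N}} \frac{\norm{f}_{L^{2k}(J;E)}}{\sqrt{k}},
\end{align*}
for every bounded interval $J \subseteq I$, with constant depending only on $T$. Applying this to the difference $\tau_h \mathcal{I}(G)$ and taking the $h^{-\alpha}$-weighted supremum over $h \in I$ (which is the $r=\infty$ Besov-Orlicz seminorm), one obtains
\begin{align*}
\seminorm{\mathcal{I}(G)}_{B^\alpha_{\Phi_2,\infty}(I;E)} \lesssim \sup_{k \in \mathbb{N}} \frac{\seminorm{\mathcal{I}(G)}_{B^\alpha_{2k,\infty}(I;E)}}{\sqrt{k}},
\end{align*}
and analogously for the $L^{\Phi_2}(I;E)$-part of the full norm, which is controlled directly by the BDG inequality in type $2$ Banach spaces on the process $\mathcal{I}(G)(t)$ itself.

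Next I would insert Lemma~\ref{lem:time-reg-integral} with exponent $2k$, fine index $\infty$, and some fixed $\theta \in (1,2]$ (say $\theta = 2$), to obtain for every $k \geq 1$
\begin{align*}
\left(\mathbb{E}\bigl[\seminorm{\mathcal{I}(G)}_{B^\alpha_{2k,\infty}}^{2k\theta}\bigr]\right)^{\frac{1}{2k\theta}} \lesssim \sqrt{k}\, \norm{G}_{L^{2k\theta}(\Omega;L^r(I;\gamma(U;E)))}.
\end{align*}
The $\sqrt{k}$-factor cancels against the $1/\sqrt{k}$ from the Besov-Orlicz reduction, so that after raising to the $q$-th power, summing over $k$ against the Taylor weights $1/k!$, and using Stirling $k! \geq (k/e)^k$ to absorb the combinatorics, I expect to land at
\begin{align*}
\left(\mathbb{E}\bigl[ \norm{\mathcal{I}(G)}_{B^\alpha_{\Phi_2,\infty}(I;E)}^q\bigr]\right)^{1/q} \lesssim \sqrt{q}\, \norm{G}_{L^{N_q}(\Omega; L^r(I; \gamma(U;E)))}.
\end{align*}
The Orlicz function $N_q(t) = t^q \log^{q/2}(t+1)$ on the probability side arises precisely because the $k$-th summand demands an $L^{2k}(\Omega)$-moment of $G$ with growth $\sqrt{k} \sim \sqrt{\log(\cdot)}$, and $N_q$ is exactly the Luxemburg-duality companion of this family of moments; it is the minimal Orlicz function on $\Omega$ whose norm controls $\sup_k \norm{G}_{L^{2k}_\omega L^r_t \gamma}/\sqrt{k}$ up to $L^q(\Omega)$-integrability.

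The main obstacle will be the bookkeeping of constants when transporting the supremum in $k$ inside $\mathbb{E}[\cdot]^{1/q}$ while preserving the sharp $\sqrt{q}$ factor on the right and identifying the precise Orlicz function $N_q$. In particular, the constant $(\theta-1)^{-1/(q\theta)}$ coming from Lemma~\ref{lem:time-reg-integral} must be handled by a judicious, possibly $k$-dependent, choice of $\theta$, and the Stirling-based reduction from $L^{\Phi_2}$ to $\sup_k L^{2k}/\sqrt{k}$ must be implemented so that no additional $q$-dependence sneaks into the constant. This is essentially the Orlicz analogue of the standard observation that $\sup_k \norm{X}_{L^{2k}(\Omega)}/\sqrt{k}$ characterizes sub-Gaussian tails, now carried out simultaneously in the time and the probability variable.
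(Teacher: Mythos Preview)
The paper does not give a proof of this lemma; it is quoted from \cite{MR4116708}, Theorem~3.2(vi), and used as a black box. So there is no in-paper argument to compare against, and I can only comment on the viability of your sketch.

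Your reduction has a genuine gap at exactly the point you call the ``main obstacle'', and it is not a matter of bookkeeping. Invoking Lemma~\ref{lem:time-reg-integral} with integrability exponent $2k$ and some $\theta\in(1,2]$ requires $G\in L^{2k\theta}_\omega L^r_t\gamma(U;E)$, and the output is an $L^{2k\theta}(\Omega)$-bound on $\seminorm{\mathcal{I}(G)}_{B^\alpha_{2k,\infty}}$. But the hypothesis of the lemma is only $G\in L^{N_q}(\Omega;L^r_t\gamma(U;E))$ with $N_q(t)=t^q\log^{q/2}(t+1)$; on a probability space this Orlicz class sits between $L^{q+\varepsilon}$ and $L^q$ for every $\varepsilon>0$, so it provides no $L^{2k\theta}(\Omega)$-moment of $G$ once $2k\theta>q+\varepsilon$. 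For all but finitely many $k$ the right-hand side of your displayed estimate is therefore $+\infty$, and no cancellation of $\sqrt{k}$-factors or summation against $1/k!$ can rescue this. Your closing claim that $L^{N_q}(\Omega)$ ``controls $\sup_k \norm{G}_{L^{2k}_\omega L^r_t\gamma}/\sqrt{k}$'' is in fact the characterisation of $L^{\Phi_2}(\Omega)$ (sub-Gaussian tails of $\norm{G}_{L^r_t\gamma}$ in $\omega$), which is vastly stronger than $L^{N_q}(\Omega)$; you have transposed the roles of the time and probability variables. The exponential integrability $\Phi_2$ lives on the \emph{time} axis and is generated by the $\sqrt{q}$-growth of the BDG constant through the conditional increment bound of Lemma~\ref{lem:reg-cond-exp}; the logarithmic correction $N_q$ on the \emph{probability} axis is the price for passing a countable supremum over dyadic scales $h$ inside the expectation, not a statement about moment growth of $G$.
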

An immediate consequence is the stability of the compositional operator.
\begin{corollary} \label{cor:stoch-int-2}
Let $p \geq 2$ and Assumption \ref{ass:Noise} be satisfied. Then for all $q \in \com{[1, }\infty)$
\begin{align} \label{eq:cor-stoch-2}
 \norm{\mathcal{I}(G(v))}_{L^{q}_\omega B_{\Phi_2,\infty}^{1/2}(0,T;W^{1,p}_x)} \lesssim  \norm{v}_{L_\omega^{\com{N_q}} L^\infty_t W^{1,p}_x} + 1.
\end{align}
\end{corollary}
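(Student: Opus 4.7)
The plan is to derive Corollary~\ref{cor:stoch-int-2} as a direct composition of two ingredients already at our disposal: the abstract stability estimate for $\mathcal{I}$ in exponential Besov--Orlicz spaces (Lemma~\ref{lem:time-reg-integral-2}) and the boundedness of the composition operator $v \mapsto G(v)$ from $W^{1,p}_{0,x}$ into $\gamma(U;W^{1,p}_{0,x})$ (Lemma~\ref{lem:G-gamma}). The choice of parameters is dictated by the statement: we pick $E = W^{1,p}_{0,x}$, integrability exponent $r = \infty$, so that the critical differentiability $\alpha = \tfrac{1}{2}-\tfrac{1}{r}$ becomes $\tfrac{1}{2}$, which matches the exponent on the left-hand side.

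First I would verify that $W^{1,p}_{0,x}$ is a separable Banach space of type $2$ for $p \geq 2$. This is analogous to Proposition~\ref{prop:type2}: $W^{1,p}_{0,x}$ embeds isometrically into the $L^p$-space of pairs $(u,\nabla u)$ on $\mathcal{O}$, and type $2$ is inherited by closed subspaces. Separability is standard.

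Next I would invoke Lemma~\ref{lem:time-reg-integral-2} with $E=W^{1,p}_{0,x}$ and $r=\infty$ to obtain
\begin{align*}
\norm{\mathcal{I}(G(v))}_{L^q_\omega B_{\Phi_2,\infty}^{1/2}(0,T;W^{1,p}_x)} \lesssim \sqrt{q}\,\norm{G(v)}_{L^{N_q}(\Omega;L^\infty(0,T;\gamma(U;W^{1,p}_x)))}.
\end{align*}
The remaining task is to dominate the $\gamma$-radonifying norm of $G(v)$. Lemma~\ref{lem:G-gamma} provides the pointwise-in-$(\omega,t)$ bound $\norm{G(v)(\omega,t)}_{\gamma(U;W^{1,p}_x)} \lesssim \norm{v(\omega,t)}_{W^{1,p}_x} + 1$ (the proof of that lemma is pointwise in $v$, so taking the essential supremum in $t$ and then the Luxemburg norm in $\omega$ preserves the estimate). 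Thus
\begin{align*}
\norm{G(v)}_{L^{N_q}(\Omega;L^\infty(0,T;\gamma(U;W^{1,p}_x)))} \lesssim \norm{v}_{L^{N_q}_\omega L^\infty_t W^{1,p}_x} + 1,
\end{align*}
where the $+1$ accounts for the constant term through the triangle inequality of the Luxemburg norm applied to the sum of $\norm{v}_{W^{1,p}_x}$ and a deterministic constant. Composing the two inequalities and absorbing the $\sqrt{q}$ factor into the implicit constant (the statement does not track $q$-dependence on the right) yields the desired estimate.

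There is no substantial obstacle here; the only subtlety is that the Orlicz norm $L^{N_q}$ is non-homogeneous, so the triangle inequality must be used in its Luxemburg form rather than through a scaling argument. Since $N_q(t) = t^q \log^{q/2}(t+1)$ satisfies the $\Delta_2$-condition, this causes no difficulty: the constant function $1$ lies in every $L^{N_q}(\Omega)$ with a controlled norm, which is enough to conclude.
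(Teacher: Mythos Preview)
Your proposal is correct and mirrors exactly what the paper intends: it states Corollary~\ref{cor:stoch-int-2} as an ``immediate consequence'' of Lemma~\ref{lem:time-reg-integral-2} without writing out a proof, and the analogous Corollary~\ref{cor:stoch-int} is proved in precisely the way you describe (apply the abstract lemma with $E = W^{1,p}_x$, $r = \infty$, then invoke Lemma~\ref{lem:G-gamma} pointwise). Your remarks on the $\Delta_2$-condition for $N_q$ and the Luxemburg triangle inequality are more careful than the paper bothers to be, but they are accurate and cause no issue.
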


Before we state the main result, let us shortly explain where the exponentially integrable spaces are located in terms of the usual scale of Lebesgue spaces. For a general overview we refer to the book \cite{DiHaHaRu}.
\begin{lemma}[\com{\cite{DiHaHaRu} Corollary~3.3.4}] \label{lem:exp-integrable}
Let $\Phi_2(t) = e^{t^2} - 1$. Then 
\begin{align*}
L^\infty(I) \hookrightarrow L^{\Phi_2}(I) \hookrightarrow L^{\infty-}(I)
\end{align*}
with continuous embeddings\com{, i.e. for all $q \in[1,\infty)$ there exist constants $C, C_q >0$ such that for all $u \in L^\infty(I)$ 
\begin{align}
\norm{u}_{L^q(I)} \leq C_q \norm{u}_{L^{\Phi_2}(I)} \leq C \norm{u}_{L^\infty(I)}.
\end{align}
}
\end{lemma}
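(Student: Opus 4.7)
The plan is to verify both embeddings directly from the definition of the Luxemburg norm given in Section~\ref{sec:Function spaces}, without appealing to general Orlicz-space machinery.

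For the first embedding $L^\infty(I) \hookrightarrow L^{\Phi_2}(I)$, I would take a nonzero $u \in L^\infty(I)$ and test the defining inequality of the Luxemburg norm with $\lambda := C \norm{u}_{L^\infty(I)}$, where $C>0$ is chosen below. Since $\abs{u}/\lambda \leq 1/C$ pointwise, monotonicity of $\Phi_2$ yields
\begin{align*}
\int_I \Phi_2 \left( \frac{\abs{u}}{\lambda} \right) \ds \leq \abs{I} \Phi_2(1/C) = \abs{I} \left( e^{1/C^2} - 1 \right).
\end{align*}
Choosing $C$ so that $C^2 \geq 1/\log(1 + 1/\abs{I})$ makes the right-hand side at most $1$, so $\lambda$ is admissible in the infimum defining $\norm{u}_{L^{\Phi_2}(I)}$, proving $\norm{u}_{L^{\Phi_2}(I)} \leq C \norm{u}_{L^\infty(I)}$.

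For the second embedding $L^{\Phi_2}(I) \hookrightarrow L^{\infty-}(I)$, I would fix $q \in [1,\infty)$, set $\lambda := \norm{u}_{L^{\Phi_2}(I)}$, and expand $\Phi_2$ as a power series, $e^{t^2} - 1 = \sum_{k\geq 1} t^{2k}/k!$. Applying Fubini/Tonelli to interchange sum and integral in
\begin{align*}
\sum_{k \geq 1} \frac{1}{k! \lambda^{2k}} \int_I \abs{u}^{2k} \ds = \int_I \Phi_2(\abs{u}/\lambda) \ds \leq 1,
\end{align*}
and keeping only the term $k = \lceil q/2 \rceil$ gives the moment bound $\int_I \abs{u}^{2k} \ds \leq k! \lambda^{2k}$. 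Hölder's inequality on the bounded interval $I$ then yields $\norm{u}_{L^q(I)} \leq \abs{I}^{1/q - 1/(2k)} (k!)^{1/(2k)} \lambda$, which is the desired estimate with an explicit constant $C_q$.

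The argument is essentially routine; the only minor technical point is keeping track of the dependence on $\abs{I}$ and of $k = \lceil q/2 \rceil$ in the second step, which is why the constant $C_q$ must be allowed to depend on $q$ (and is in general not uniformly bounded as $q \to \infty$, reflecting that $L^{\Phi_2}(I)$ is strictly smaller than $L^{\infty-}(I)$ in general but strictly larger than $L^\infty(I)$).
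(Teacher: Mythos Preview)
Your argument is correct. The paper does not actually prove this lemma; it is stated with a citation to \cite{DiHaHaRu} Corollary~3.3.4 and no proof is given in the text. So there is no ``paper's own proof'' to compare against in the strict sense.

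That said, your approach differs from what the citation suggests: the reference invokes a general comparison principle for Orlicz spaces (domination of one Young function by another implies the corresponding norm inequality), whereas you bypass this machinery entirely and argue directly from the Luxemburg-norm definition and the power-series expansion of $\Phi_2$. Both routes are standard; yours has the advantage of being self-contained and of producing explicit constants $C = (\log(1+1/\abs{I}))^{-1/2}$ and $C_q = \abs{I}^{1/q-1/(2k)}(k!)^{1/(2k)}$ with $k=\lceil q/2\rceil$, which makes transparent why $C_q$ blows up like $\sqrt{q}$ as $q\to\infty$. One small point worth making explicit: when you set $\lambda = \norm{u}_{L^{\Phi_2}(I)}$ in the second step, you are using that the infimum in the Luxemburg norm is attained, which follows from Fatou's lemma; and the trivial case $\lambda=0$ (i.e.\ $u=0$ a.e.) should be excluded or handled separately.
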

%
%
%

\subsection{Exponential Besov regularity} \label{sec:Besov_regularity}
We are ready to formulate the first main result on time regularity of strong solutions:

\begin{theorem}[Exponential Besov regularity]\label{thm:Besov-reg}
Let the assumptions of Theorem \ref{thm:strong-sol} be satisfied. Let $u$ be the unique strong solution to \eqref{eq:p-Laplace}. Additionally assume 
\begin{align} \label{ass:thm-Besov}
u \in L^q_\omega L^\infty_t L^2_x
\end{align}
for some $q > 2$. Then $u \in L^2_\omega B^{1/2}_{\Phi_2,\infty} L^2_x$ with
\begin{align}\label{eq:thm-Besov}
\norm{u}_{L^2_\omega B^{1/2}_{\Phi_2,\infty} L^2_x} \lesssim \norm{u_0}_{L^p_\omega W^{1,p}_x}^{p/2} + \norm{u}_{L^q_\omega L^\infty_t L^2_x} + 1,
\end{align}
where $\Phi_2(t) = e^{t^2}-1$.
\end{theorem}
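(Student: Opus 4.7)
The plan is to use the identity
\begin{align*}
u(t) = u_0 + \int_0^t \Div S(\nabla u)(s)\, ds + \mathcal{I}(G(u))(t)
\end{align*}
from Definition~\ref{def:sol}, estimate each summand separately in $L^2_\omega B^{1/2}_{\Phi_2,\infty}(I; L^2_x)$, and combine them by the triangle inequality; the three contributions will account for the three terms on the right-hand side of \eqref{eq:thm-Besov}.

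The initial datum $u_0$ is constant in $t$, so its Besov--Orlicz seminorm vanishes and only the Luxemburg $L^{\Phi_2}_t$-norm contributes; since $\mathcal{O}$ is bounded and $p \geq 2$, $W^{1,p}_x \hookrightarrow L^2_x$, which yields a contribution bounded by $\norm{u_0}_{L^p_\omega W^{1,p}_x}$. For the drift I will use the pathwise Cauchy--Schwarz estimate
\begin{align*}
\Bigl\|\int_s^t \Div S(\nabla u)(\sigma)\, d\sigma\Bigr\|_{L^2_x} \leq |t-s|^{1/2}\,\norm{\Div S(\nabla u)}_{L^2_t L^2_x},
\end{align*}
which puts the drift pathwise in $C^{1/2}(I;L^2_x) = B^{1/2}_{\infty,\infty}(I; L^2_x)$. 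The embedding $L^\infty(I) \hookrightarrow L^{\Phi_2}(I)$ from Lemma~\ref{lem:exp-integrable} upgrades this to $B^{1/2}_{\Phi_2,\infty}$; taking the $L^2_\omega$-norm and combining with the a priori bound~\eqref{eq:est-strong} produces a control by $\norm{u_0}_{L^p_\omega W^{1,p}_x}^{p/2} + 1$, which explains the $p/2$-power appearing in the statement.

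The heart of the argument is the stochastic integral, where I will apply Lemma~\ref{lem:time-reg-integral-2} with $E = L^2_x$ (a Hilbert space, hence type $2$), $r = \infty$, $\alpha = 1/2$ and $q = 2$. Since $\gamma(U;L^2_x)$ coincides with the Hilbert--Schmidt space $L_2(U;L^2_x)$, the sublinear growth assumption~\eqref{ass:growth} gives the pointwise bound
\begin{align*}
\norm{G(u)(s)}_{\gamma(U;L^2_x)}^2 = \sum_{j \in \mathbb{N}} \norm{g_j(\cdot, u(s))}_{L^2_x}^2 \lesssim 1 + \norm{u(s)}_{L^2_x}^2.
\end{align*}
Taking $L^\infty$ in $s$ and then $L^{N_2}$ in $\omega$, Lemma~\ref{lem:time-reg-integral-2} delivers
\begin{align*}
\norm{\mathcal{I}(G(u))}_{L^2_\omega B^{1/2}_{\Phi_2,\infty} L^2_x} \lesssim 1 + \norm{u}_{L^{N_2}_\omega L^\infty_t L^2_x}.
\end{align*}

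It remains to dispose of the exponential-type integrability in $\omega$: since $N_2(t) = t^2 \log(t+1) \lesssim t^q$ for large $t$ whenever $q > 2$, the Orlicz embedding $L^q(\Omega) \hookrightarrow L^{N_2}(\Omega)$ holds, and the extra hypothesis~\eqref{ass:thm-Besov} controls the right-hand side above. Summing the three estimates proves~\eqref{eq:thm-Besov}. The principal obstacle is the critical differentiability $\alpha = 1/2$, which forces the exponential Orlicz scale $L^{\Phi_2}_t$ in time and the logarithmically enlarged scale $L^{N_2}(\Omega)$ in probability; once the stability estimate of Lemma~\ref{lem:time-reg-integral-2} is granted, the proof reduces to the decomposition above together with two standard Orlicz embeddings.
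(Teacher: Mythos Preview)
Your proposal is correct and follows essentially the same approach as the paper: the same three-term decomposition via the strong solution formula, the same pathwise $C^{1/2}$ estimate for the drift combined with the embedding $L^\infty(I)\hookrightarrow L^{\Phi_2}(I)$ from Lemma~\ref{lem:exp-integrable}, and the same use of Lemma~\ref{lem:time-reg-integral-2} (the paper routes this through Corollary~\ref{cor:stoch-int-2}) together with the Orlicz embedding $L^q(\Omega)\hookrightarrow L^{N_2}(\Omega)$ for $q>2$ to handle the stochastic integral. Your direct application of Lemma~\ref{lem:time-reg-integral-2} with $E=L^2_x$ and the identification $\gamma(U;L^2_x)=L_2(U;L^2_x)$ is arguably cleaner than invoking the $W^{1,p}_x$-valued Corollary~\ref{cor:stoch-int-2}, but the substance is identical.
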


\begin{proof}
Since we assume that $u$ is a strong solution to \eqref{eq:p-Laplace} it holds for almost all $(\omega,t,x)$ 
\begin{align}\label{eq:rep-strong-sol}
u(t) = u_0 + \int_0^t \Div S(\nabla u) \ds + \mathcal{I}(G(u))(t).
\end{align} 
Therefore, due to the triangle inequality and Lemma \ref{lem:exp-integrable}
\begin{align*}
\norm{u}_{L^2_\omega L^{\Phi_2}_t L^2_x} &\leq \norm{u_0}_{L^2_\omega L^{\Phi_2}_t L^2_x} + \norm{\int_0^\cdot \Div S(\nabla u) \ds}_{L^2_\omega L^{\Phi_2}_t L^2_x} + \norm{\mathcal{I}(G(u))}_{L^2_\omega L^{\Phi_2}_t L^2_x}  \\
&\lesssim \norm{u_0}_{L^2_\omega L^2_x} + \norm{\Div S(\nabla u) }_{L^2_\omega L^2_t L^2_x} + \norm{\mathcal{I}(G(u))}_{L^2_\omega L^{\Phi_2}_t L^2_x}.
\end{align*}
The first term is bounded by assumption. The second term is bounded by the a priori estimate \eqref{eq:est-strong}. The last term is controlled by Corollary \ref{cor:stoch-int-2}. Thus,
\begin{align*}
\norm{u}_{L^2_\omega L^{\Phi_2}_t L^2_x} \lesssim \norm{u_0}_{L^2_\omega L^2_x} + \norm{u_0}_{L^p_\omega W^{1,p}_x}^{p/2} + 1 + \norm{u}_{L^q_\omega L^\infty_t L^2_x}.
\end{align*}

It remains to estimate the Besov-Orlicz semi-norm
\begin{align*}
\mathbb{E} \left[ \seminorm{u}_{B^{1/2}_{\Phi_2,\infty}(I;L^2_x)}^2 \right] = \mathbb{E} \left[ \left( \sup_h h^{-1/2} \norm{\tau_h u}_{L^{\Phi_2}(I \cap I-\set{h}; L^2_x)} \right)^2 \right].
\end{align*}
Using the representation \eqref{eq:rep-strong-sol}
\begin{align*}
\mathbb{E} \left[ \seminorm{u}_{B^{1/2}_{\Phi_2,\infty}(I;L^2_x)}^2 \right] &\lesssim \mathbb{E} \left[ \left( \sup_h h^{-1/2} \norm{\int_{\cdot}^{\cdot + h} \Div S(\nabla u) \ds  }_{L^{\Phi_2}(I \cap I-\set{h}; L^2_x)} \right)^2 \right] \\
&\quad + \mathbb{E} \left[ \left( \sup_h h^{-1/2} \norm{\tau_h \mathcal{I}(G(u))}_{L^{\Phi_2}(I \cap I-\set{h}; L^2_x)}  \right)^2 \right] \\
&=: \mathrm{I} + \mathrm{II}.
\end{align*}
The first term can be estimated due to the $L^2_\omega L^2_t$ integrability of the expression $\norm{\Div S(\nabla u)}_{L^2_x}$. Invoking Lemma \ref{lem:exp-integrable} and H\"older's inequality
\begin{align*}
\mathrm{I} &\lesssim \mathbb{E} \left[ \left( \sup_h h^{-1/2} \sup_{t \in I \cap I-\set{h}} \int_{t}^{t + h} \norm{\Div S(\nabla u)  }_{L^2_x} \ds \right)^2 \right] \\
&\leq \mathbb{E} \left[  \sup_h \sup_{t \in I \cap I-\set{h}} \int_{t}^{t + h} \norm{\Div S(\nabla u)  }_{L^2_x}^2 \ds  \right] \\
&\leq \norm{\Div S(\nabla u)}_{L^2_\omega L^2_t L^2_x}^2 \\
&\lesssim \norm{u_0}_{L^p_\omega W^{1,p}_x}^{p} + 1,
\end{align*}
where in the last line we applied the a priori estimate \eqref{eq:est-strong}. The second term is bounded via Corollary \ref{cor:stoch-int-2}
\begin{align*}
\mathrm{II} &\leq \norm{\mathcal{I}(G(u))}_{L^{2}_\omega B_{\Phi_2,\infty}^{1/2}(0,T;L^2_x)}^2  \lesssim \norm{u}_{L^q_\omega L^\infty_t L^2_x}^2 + 1.
\end{align*}
Overall, the claim \eqref{eq:thm-Besov} follows and the proof is finished.
\end{proof}

\begin{corollary} \label{cor:cor-hoelder}
Let the assumptions of Theorem \ref{thm:Besov-reg} be satisfied. Then it holds $u \in L^2_\omega C^{1/2-}_t L^2_x$ and
\begin{align}\label{eq:cor-hoelder}
\norm{u}_{L^2_\omega C^{1/2-}_t L^2_x} \lesssim \norm{u_0}_{L^p_\omega W^{1,p}_x}^{p/2} + \norm{u}_{L^q_\omega L^\infty_t L^2_x} + 1.
\end{align}
\end{corollary}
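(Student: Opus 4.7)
The plan is to deduce the H\"older regularity from the exponential Besov regularity already proven in Theorem~\ref{thm:Besov-reg} via a routine chain of embeddings. Consistent with the notation $L^{q-}$ introduced in Section~\ref{sec:Function spaces}, I interpret $C^{1/2-}_t$ as $\bigcap_{\alpha < 1/2} C^\alpha_t$, so the implicit constant in \eqref{eq:cor-hoelder} is allowed to depend on the chosen H\"older exponent $\alpha < 1/2$.

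First I would fix $\alpha \in (0,1/2)$ and observe that Lemma~\ref{lem:exp-integrable} yields $L^{\Phi_2}(I) \hookrightarrow L^r(I)$ for every finite $r$. Applying this embedding inside the Luxemburg-norm of the difference quotient $\tau_h u$ that appears in the Besov-Orlicz seminorm gives, pointwise in $h$,
\[
\norm{\tau_h u}_{L^r(I \cap I-\set{h}; L^2_x)} \leq C_r \norm{\tau_h u}_{L^{\Phi_2}(I \cap I-\set{h}; L^2_x)},
\]
and taking the $h^{-1/2}$-weighted essential supremum on both sides produces the continuous embedding
\[
B^{1/2}_{\Phi_2,\infty}(I; L^2_x) \hookrightarrow B^{1/2}_{r,\infty}(I; L^2_x)
\]
for every finite $r$.

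Next I would invoke the classical Besov-H\"older embedding $B^{1/2}_{r,\infty}(I; L^2_x) \hookrightarrow C^{1/2-1/r}(I; L^2_x)$, valid whenever $1/2 - 1/r > 0$ and provable for instance via the Garsia-Rodemich-Rumsey inequality applied to difference quotients. Choosing $r = r(\alpha)$ large enough that $1/2 - 1/r > \alpha$ then yields
\[
B^{1/2}_{\Phi_2,\infty}(I; L^2_x) \hookrightarrow C^{\alpha}(I; L^2_x),
\]
with a constant depending on $\alpha$.

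Finally, I would take the $L^2_\omega$-norm of this (pointwise in $\omega$) embedding and combine with \eqref{eq:thm-Besov} from Theorem~\ref{thm:Besov-reg} to obtain
\[
\norm{u}_{L^2_\omega C^\alpha_t L^2_x} \lesssim \norm{u}_{L^2_\omega B^{1/2}_{\Phi_2,\infty} L^2_x} \lesssim \norm{u_0}_{L^p_\omega W^{1,p}_x}^{p/2} + \norm{u}_{L^q_\omega L^\infty_t L^2_x} + 1,
\]
which is exactly \eqref{eq:cor-hoelder}. There is no genuine obstacle here; the only subtlety is that the embedding constant blows up as $\alpha \uparrow 1/2$, which is why the endpoint cannot be reached and only the \emph{sub}-critical regularity $C^{1/2-}$ is claimed.
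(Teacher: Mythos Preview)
Your argument is correct and matches what the paper intends: the corollary is stated without proof, relying implicitly on the chain $B^{1/2}_{\Phi_2,\infty} \hookrightarrow B^{1/2}_{r,\infty} \hookrightarrow C^{1/2-1/r}$ (the latter embedding is used verbatim in the remark after Theorem~\ref{thm:higher-reg}). You have simply spelled out the details.
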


\begin{remark}
Hyt\"onen and Veraar have obtained a precise statement about time regularity in terms of Besov spaces for Banach space valued Brownian motions in \cite{VerHyt08}. Indeed, they show that any $E$-valued Brownian motion $W$ satisfies
\begin{align*}
W \in B^{1/2}_{\Phi_2,\infty}(I;E) \quad \text{ a.s. }
\end{align*}
Theorem \ref{thm:Besov-reg} proves that the $L^2_x$-valued solution process $u$ is as regular in time as a $L^2_x$-valued Brownian motion. 

Additionally, the divergence of the nonlinear operator in the representation \eqref{eq:rep-strong-sol} has only slightly higher time regularity in comparision to the stochastic term, i.e. 
\begin{align*}
(\omega,t,x) \mapsto \int_0^t \Div S(\nabla u) \ds \in L^2_\omega W^{1,2}_t L^2_x \hookrightarrow L^2_\omega C^{1/2}_t L^2_x \hookrightarrow L^2_\omega B^{1/2}_{\Phi_2,\infty} L^2_x,
\end{align*}
with continuous embeddings. However, as Corollary \ref{cor:stoch-int-2} suggests, the stochastic integral even has Besov regularity on the gradient level. 

The assumption \eqref{ass:thm-Besov} is not restrictive. It can be verified under suitable integrability assumptions on the initial condition, e.g. $u_0 \in L^q_\omega L^2_x$ is sufficient to ensure 
\begin{align*}
\mathbb{E} \left[\sup_t \norm{u}_{L^2_x}^q \right] \leq c_q \left( \mathbb{E} \left[ \norm{u_0}_{ L^2_x}^q \right] + 1 \right).
\end{align*}
\end{remark}

\subsection{Nikolskii regularity of nonlinear gradient} \label{sec:Nikolskii_regularity}
The key ingredient while deriving estimates for the nonlinear term $V(\nabla u) $ is the $V$-coercivity Lemma \ref{lem:V-coercive} that relates the nonlinear operators $S$ and $V$. Now we present the second main result:


\begin{theorem}[Nikolskii regularity for nonlinear gradient] \label{thm:time-reg-sol}
Let the assumptions of Theorem \ref{thm:strong-sol} be satisfied. Let $u$ be the unique strong solution to \eqref{eq:p-Laplace}. Additionally assume 
\begin{align} \label{ass:thm-Nikolskii}
u \in L^q_\omega L^\infty_t W^{1,p}_x
\end{align}
for some $q > p$. Then $V(\nabla u) \in L^2_\omega B^{1/2}_{2,\infty} L^2_x$ with 
\begin{align}\label{eq:thm-Nikolskii}
\norm{V(\nabla u)}_{L^2_\omega B^{1/2}_{2,\infty} L^2_x}^2 \lesssim \norm{u_0}_{L^p_\omega W^{1,p}_x}^p + \norm{u}_{L^q_\omega L^\infty_t W^{1,p}_x}^p +1.
\end{align}
\end{theorem}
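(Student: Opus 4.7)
The plan is to exploit the $V$-coercivity of Lemma~\ref{lem:V-coercive} to convert the $B^{1/2}_{2,\infty}$-semi-norm of $V(\nabla u)$ into the bilinear pairing $\int \tau_h S(\nabla u) : \nabla \tau_h u\,\dx\dt$, integrate by parts, insert the strong formulation of the equation for $\tau_h u$ from Definition~\ref{def:sol}, and control the resulting deterministic remainder by~\eqref{eq:est-strong} and the stochastic remainder by the $W^{1,p}_x$-valued Nikolskii regularity of the stochastic integral from Corollary~\ref{cor:stoch-int}.

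The $L^2_\omega L^2_t L^2_x$-part is immediate: pointwise $\abs{V(\nabla u)}^2 \eqsim (\kappa+\abs{\nabla u})^{p-2}\abs{\nabla u}^2 \lesssim \kappa^p+\abs{\nabla u}^p$, so $\norm{V(\nabla u)}_{L^2_\omega L^2_t L^2_x}^2 \lesssim \mathbb{E}\norm{u}_{L^p_t W^{1,p}_x}^p + 1$, which is controlled by~\eqref{eq:est-strong}. For the semi-norm I work pathwise. Lemma~\ref{lem:V-coercive} combined with integration by parts (valid since $u(t),u(t+h)\in W^{1,p}_{0,x}$) gives
\begin{align*}
h^{-1} \norm{\tau_h V(\nabla u)}_{L^2_t L^2_x}^2 \eqsim -h^{-1} \int_0^{T-h} \int_\mathcal{O} \tau_h \Div S(\nabla u)\cdot \tau_h u \dx \dt.
\end{align*}
Inserting $\tau_h u(t) = \int_t^{t+h} \Div S(\nabla u)\ds + \tau_h \mathcal{I}(G(u))(t)$ splits the right-hand side as $A(h)+B(h)$, with $A$ the deterministic and $B$ the stochastic contribution. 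Cauchy--Schwarz in $x$ and $s$, then in $t$, together with Fubini, yields $\abs{A(h)} \lesssim \norm{\Div S(\nabla u)}_{L^2_t L^2_x}^2$ uniformly in $h$; in expectation this is $\lesssim \norm{u_0}_{L^p_\omega W^{1,p}_x}^p+1$ by~\eqref{eq:est-strong}.

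For $B(h)$ I integrate by parts once more---permissible because $\mathcal{I}(G(u))(t)\in W^{1,p}_{0,x}$ thanks to Lemma~\ref{lem:G-gamma} and Assumption~\ref{ass:Noise}\ref{ass:boundary-noise}---to rewrite
\begin{align*}
B(h) = h^{-1} \int_0^{T-h} \int_\mathcal{O} \tau_h S(\nabla u) : \nabla \tau_h \mathcal{I}(G(u)) \dx \dt.
\end{align*}
Applying Lemma~\ref{lem:1side-young} pointwise with $\xi_1 = \nabla u(t+h)$, $\xi_2 = \nabla u(t)$, and $\xi_3 = \pm\nabla \tau_h \mathcal{I}(G(u))$ (to retrieve the absolute value), followed by H\"older in $x$ with exponents $\tfrac{p}{p-2}$ and $\tfrac{p}{2}$, gives
\begin{align*}
\abs{B(h)} \le \delta\, h^{-1}\norm{\tau_h V(\nabla u)}_{L^2_t L^2_x}^2 + c_\delta \bigl(1+\norm{u}_{L^\infty_t W^{1,p}_x}^p\bigr)^{\frac{p-2}{p}} h^{-1}\norm{\nabla \tau_h \mathcal{I}(G(u))}_{L^2_t L^p_x}^2.
\end{align*}
Choosing $\delta$ small and using the $V$-coercivity lower bound absorbs the first summand into the left-hand side. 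Taking the supremum in $h$ bounds the stochastic contribution by $(1+\norm{u}_{L^\infty_t W^{1,p}_x}^p)^{\frac{p-2}{p}}\norm{\mathcal{I}(G(u))}_{B^{1/2}_{2,\infty}(I;W^{1,p}_x)}^2$. Taking expectation and applying H\"older in $\omega$ with exponents $\tfrac{p}{p-2}$ and $\tfrac{p}{2}$ reduces matters to bounding $\mathbb{E}[\norm{\mathcal{I}(G(u))}_{B^{1/2}_{2,\infty}(I;W^{1,p}_x)}^p]^{2/p}$, which Corollary~\ref{cor:stoch-int} delivers with $q=p$ and $\theta = \min(q/p,2) > 1$ (using the hypothesis $u\in L^q_\omega L^\infty_t W^{1,p}_x$), combined with Jensen's inequality and the embedding $B^{1/2}_{p,\infty}(I) \hookrightarrow B^{1/2}_{2,\infty}(I)$ valid on the bounded interval. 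A final Young's inequality in $\omega$ then yields~\eqref{eq:thm-Nikolskii}.

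The main obstacle is that $\tau_h \mathcal{I}(G(u))$ only enjoys $h^{1/2}$-type decay in time, so a na\"ive Cauchy--Schwarz on the stochastic cross term $\tau_h \Div S \cdot \tau_h \mathcal{I}(G)$ falls short of the required $h^1$-decay by a factor of $h^{1/2}$. This is overcome by the second integration by parts, which moves a spatial derivative onto $\mathcal{I}(G(u))$ and thereby measures the stochastic contribution in $W^{1,p}_x$, where Corollary~\ref{cor:stoch-int} provides the missing $B^{1/2}$ regularity; Lemma~\ref{lem:1side-young} is precisely the one-sided Young inequality needed to self-absorb the target $\abs{\tau_h V(\nabla u)}^2$ term. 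The strict inequality $q>p$ in the hypothesis is exactly the slack required to invoke Corollary~\ref{cor:stoch-int} with a fine parameter $\theta>1$.
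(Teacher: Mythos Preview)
Your proof is correct and follows essentially the same route as the paper: $V$-coercivity, integration by parts, insertion of the strong formulation, a second integration by parts on the stochastic term (using Assumption~\ref{ass:Noise}\ref{ass:boundary-noise}), and Lemma~\ref{lem:1side-young} to self-absorb the $\abs{\tau_h V(\nabla u)}^2$ contribution. The only technical differences are that the paper applies Young's inequality \emph{pointwise in $x$} (with exponents $p/(p-2)$ and $p/2$, carrying a power of $h$) to land directly on the $B^{1/2}_{p,\infty}(I;W^{1,p}_x)$ seminorm of $\mathcal{I}(G(u))$, whereas you apply H\"older in $x$ first and postpone the Young step to $\omega$; and the paper closes the stochastic estimate via Corollary~\ref{cor:stoch-int-2} together with the embedding $L^{\Phi_2}\hookrightarrow L^p$, while you invoke Corollary~\ref{cor:stoch-int} directly with $\theta=\min(q/p,2)>1$. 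Both variants exploit the hypothesis $q>p$ in exactly the same way and yield the same conclusion.
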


\begin{proof}
Let $u$ be the strong solution to \eqref{eq:p-Laplace}. Take $t,h \in I$ such that $t+h \in I$. Due to Lemma \ref{lem:V-coercive} it holds
\begin{align} \label{eq:thV}
\norm{\tau_h \left( V(\nabla u)\right)(t)}_{L^2_x}^2 &\eqsim \int_{\mathcal{O}} \tau_h \left( S(\nabla u) \right)(t) : \tau_h (\nabla u)(t) \dx.
\end{align}
Integration by parts and the solution formula \eqref{eq:rep-strong-sol} then imply
\begin{align} \label{eq:I+II}
\begin{aligned}
&\norm{\tau_h \left( V(\nabla u)\right)(t)}_{L^2_x}^2
\eqsim - \int_{\mathcal{O}} \tau_h \left(  \Div S(\nabla u) \right)(t) : \tau_h (u)(t) \dx \\
&\quad = - \int_{\mathcal{O}} \tau_h \left(  \Div S(\nabla u) \right)(t) : \left( \int_{t}^{t+h} \Div S(\nabla u) \ds + \tau_h (\mathcal{I}(G(u)))(t) \right) \dx \\
&\quad =: \mathrm{I} + \mathrm{II}. 
\end{aligned}
\end{align}
The first \com{term } can be estimated by H\"older's and Young's inequalities
\begin{align} \label{eq:est-1}
\begin{aligned}
\mathrm{I} &\leq \norm{\tau_h (\Div S(\nabla u))(t)}_{L^2_x} \norm{\int_{t}^{t+h} \Div S(\nabla u) \ds }_{L^2_x} \\
&\leq h \left( \norm{\tau_h (\Div S(\nabla u))(t)}_{L^2_x}^2 +\dashint_{t}^{t+h} \norm{\Div S(\nabla u) }_{L^2_x}^2 \ds  \right).
\end{aligned}
\end{align}
The second term needs a more sophisticated analysis. Due to the \com{A}ssumption~\ref{ass:Noise}~\ref{ass:boundary-noise} the stochastic integral preserves zero boundary values. Therefore, integration by parts and Lemma~\ref{lem:1side-young} reveal
\begin{align} \label{eq:II}
\begin{aligned}
\mathrm{II} &= \int_{\mathcal{O}} \tau_h (S(\nabla u)) (t) : \tau_h ( \nabla \mathcal{I}(G(u)))(t) \dx \\
&\leq \delta \norm{\tau_h V(\nabla u)(t)}_{L^2_x}^2 \\
&\quad + c_\delta \int_{\mathcal{O}} \left( \kappa + \abs{\nabla u(t)} + \abs{\tau_h (\nabla u)(t)} \right)^{p-2} \abs{\tau_h( \nabla \mathcal{I}(G(u)))(t)}^2 \dx\\
&=: \mathrm{II}_1 + \mathrm{II}_2.
\end{aligned}
\end{align}
Since $\delta >0$ is arbitary, we can absorb the first term \com{ of \eqref{eq:II} } to the left hand side.

Let us first discuss the case $p > 2$. Here the latter one is estimated by the weighted Young's inequality with exponents $\theta = p/(p-2)$ and $\theta' = p/2$ to get
\begin{align} \label{eq:est-2}
\mathrm{II}_2 &\lesssim h \left( \frac{p-2}{p} \left( \sup_t \norm{\nabla u}_{L^p_x}^p + 1\right) + \frac{2}{p} h^{-p/2} \norm{\tau_h \mathcal{I}(G(u))(t)}_{W^{1,p}_x}^p \right).
\end{align}
In the case $p=2$ it trivially holds 
\begin{align} \label{eq:est-2b}
\mathrm{II}_2 &\lesssim \norm{\tau_h \mathcal{I}(G(u))(t)}_{W^{1,2}_x}^2,
\end{align}
which coincides with \eqref{eq:est-2} for $p=2$.

Overall, we integrate \eqref{eq:thV} over $I \cap I- \set{h}$ and multiply by $h^{-1}$. Applying the estimates \eqref{eq:est-1} and \eqref{eq:est-2}, we arrive at
\begin{align*}
h^{-1} &\int_{I \cap I- \set{h}} \norm{\tau_h V(\nabla u)(t)}_{L^2_x}^2 \dt \\
&\lesssim \int_{I \cap I- \set{h}} \left( \norm{\tau_h (\Div S(\nabla u))(t)}_{L^2_x}^2 +\dashint_{t}^{t+h} \norm{\Div S(\nabla u) }_{L^2_x}^2 \ds \right) \dt \\
&+ \int_{I \cap I- \set{h}}\left( \frac{p-2}{p} \left( \sup_t \norm{\nabla u}_{L^p_x}^p + 1\right) + \frac{2}{p} h^{-p/2} \norm{\tau_h \mathcal{I}(G(u))(t)}_{W^{1,p}_x}^p \right) \dt.
\end{align*}
Finally, take the supremum in $h$ and expectation. Due to Fubini's theorem 
\begin{align*}
&\mathbb{E} \left[ \seminorm{V(\nabla u)}_{B^{1/2}_{2,\infty} L^2_x}^2 \right] = \mathbb{E} \left[ \left( \sup_h h^{-1/2} \left( \int_{I \cap I- \set{h}} \norm{\tau_h V(\nabla u)(t)}_{L^2_x}^2 \dt \right)^\frac{1}{2} \right)^2 \right] \\
&\quad \lesssim \norm{ \Div S(\nabla u)}_{L^2_\omega L^2_t L^2_x}^2 + \frac{p-2}{p} \left( \norm{\nabla u}_{L^p_\omega L^\infty_t L^p_x}^p +1 \right) \\
&\quad + \frac{2}{p}\mathbb{E} \left[ \left( \sup_h h^{-1/2} \left( \int_{I \cap I-\set{h}} \norm{\tau_h \mathcal{I}(G(u))}_{W^{1,p}_x}^p \ds \right)^\frac{1}{p} \right)^p \right]\\
&\leq \norm{ \Div S(\nabla u)}_{L^2_\omega L^2_t L^2_x}^2 + \frac{p-2}{p} \left( \norm{\nabla u}_{L^p_\omega L^\infty_t L^p_x}^p +1 \right) +\frac{2}{p} \norm{\mathcal{I}(G(u))}_{L^p_\omega B_{p,\infty}^{1/2} W^{1,p}_x}^p.
\end{align*}
Lastly, it remains to use Lemma \ref{lem:exp-integrable} and Corollary \ref{cor:stoch-int-2} with $q = p+\varepsilon$ to bound the third term in the estimate
\begin{align} \label{eq:est3}
\norm{\mathcal{I}(G(u))}_{L^p_\omega B_{p,\infty}^{1/2} W^{1,p}_x}^p \lesssim \norm{\mathcal{I}(G(u))}_{L^p_\omega B_{\Phi_2,\infty}^{1/2} W^{1,p}_x}^p \lesssim \norm{u}_{L^{q}_\omega L^\infty_t W^{1,p}_x}^p + 1.
\end{align}
Alltogether, we have proven
\begin{align*}
\mathbb{E} \left[ \seminorm{V(\nabla u)}_{B^{1/2}_{2,\infty} L^2_x}^2 \right] \lesssim  \norm{ \Div S(\nabla u)}_{L^2_\omega L^2_t L^2_x}^2 + \norm{u}_{L^q_\omega L^\infty_t W^{1,p}_x}^p +1.
\end{align*}
The remaining part of the norm is controlled by H\"older's inequality
\begin{align*}
\norm{V(\nabla u)}_{L^2_\omega L^2_t L^2_x}^2 = \norm{\nabla u}_{L^p_\omega L^p_t L^p_x}^p \lesssim \norm{\nabla u}_{L^q_\omega L^\infty_t L^p_x}^p.
\end{align*}
Overall, using the a priori estimate \eqref{eq:est-strong}, we arrive at
\begin{align*}
\norm{V(\nabla u)}_{L^2_\omega B_{2,\infty}^{1/2} L^2_x}^2 &\lesssim \norm{ \Div S(\nabla u)}_{L^2_\omega L^2_t L^2_x}^2 + \norm{u}_{L^q_\omega L^\infty_t W^{1,p}_x}^p +1\\
&\lesssim \norm{u_0}_{L^p_\omega W^{1,p}_x}^p + \norm{u}_{L^q_\omega L^\infty_t W^{1,p}_x}^p +1.
\end{align*}
The assertion is proved.
\end{proof}

\begin{remark}
The only reason, why we assume condition \ref{ass:boundary-noise} in Assumption \ref{ass:Noise}, is, so that we can revert the partial integration after using the strong formulation of the equation \eqref{eq:p-Laplace}.

The additional assumption \eqref{ass:thm-Nikolskii} is also not very restrictive. It can be verified with appropiate assumptions on the initial condition, e.g. $u_0 \in L^q_\omega W^{1,p}_x$ for some $q > p$ and follows in the same line as the proof of Theorem \ref{thm:strong-sol}.

If we do not want to allow for an increased integrability assumption on $u$ as \eqref{ass:thm-Nikolskii}, we still obtain regularity estimates by using Corollary \ref{cor:stoch-int} instead of Corollary \ref{cor:stoch-int-2} in the proof of Theorem \ref{thm:time-reg-sol}
\begin{align*}
\norm{V(\nabla u)}_{L^2_\omega B^{1/2}_{2-,\infty} L^2_x}^2 \lesssim \norm{u_0}_{L^p_\omega W^{1,p}_x}^p + \norm{u}_{L^p_\omega L^\infty_t W^{1,p}_x}^p +1.
\end{align*}

We want to point out, that one can use the regularity estimate \eqref{eq:thm-Besov} together with \eqref{eq:thm-Nikolskii} in the analysis of numerical algorithms. In \cite{MR4298537} the authors use $\alpha$-H\"older regularity of the solution process $u$ and $\alpha$-fractional Sobolev regularity of $V(\nabla u)$ to obtain convergence of order $\tau^{\alpha}$, $\alpha < 1/2$. If one instead uses the exponential Besov regularity of $u$ and the Nikolskii regularity of $V(\nabla u)$ this can be improved to $\sqrt{\tau \ln(1+ \tau^{-1})}$.
\end{remark}

\subsection{Higher order Nikolskii regularity of nonlinear gradient} \label{sec:Higher_order_Nikolskii}
Suprisingly the time regularity of the nonlinear gradient $V(\nabla u)$ is restricted due to lack of time integrability of the nonlinear diffusion $\norm{\Div S(\nabla u)}_{L^2_x} \in L^q(0,T)$ for $q >2$ rather than the reduced time regularity of the stochastic integral.
\begin{theorem}[Higher gradient regularity] \label{thm:higher-reg}
Let the Assumption \ref{ass:Noise} be satisfied. Additionally, assume
\begin{subequations}
\begin{align}
u &\in L^{q_1}_\omega L^\infty_t W^{1,p}_x,   \\ \label{ass:Div}
\Div S(\nabla u) &\in L^2_\omega L^{q_2}_t L^2_x,
\end{align}
\end{subequations}
for some $q_1 >p$ and $q_2 \geq 2$. Let $u$ be a strong solution to \eqref{eq:p-Laplace}. Then $V(\nabla u) \in L^2_\omega B^{1/2}_{q_2,\infty} L^2_x$ with
\begin{align}
&\norm{V(\nabla u)}_{L^2_\omega B^{1/2}_{q_2,\infty} L^2_x}^2 \lesssim \norm{\Div S(\nabla u)}_{L^2_\omega L^{q_2}_t L^2_x}^2 + \norm{ \nabla u}_{L^{q_1}_\omega L^\infty_t L^p_x}^p +1.
\end{align}
\end{theorem}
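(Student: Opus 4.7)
The plan is to follow the structure of the proof of Theorem~\ref{thm:time-reg-sol}, but to lift every estimate from the $L^2_t$ to the $L^{q_2}_t$ scale. First, exactly as in~\eqref{eq:thV}--\eqref{eq:II}, I would combine $V$-coercivity (Lemma~\ref{lem:V-coercive}), integration by parts against $\tau_h u(t)$, the strong formulation $\tau_h u(t) = \int_t^{t+h}\Div S(\nabla u)\ds + \tau_h\mathcal{I}(G(u))(t)$, and Lemma~\ref{lem:1side-young} (absorbing a small multiple of $\norm{\tau_h V(\nabla u)(t)}_{L^2_x}^2$ to the left) to obtain the $\mathbb{P}$-a.s.\ pointwise decomposition
\begin{align*}
\norm{\tau_h V(\nabla u)(t)}_{L^2_x}^2 \lesssim \mathrm{I}(t,h) + \mathrm{II}_2(t,h),
\end{align*}
with $\mathrm{I}(t,h) \leq h\norm{\tau_h(\Div S(\nabla u))(t)}_{L^2_x}^2 + h\dashint_t^{t+h}\norm{\Div S(\nabla u)}_{L^2_x}^2\ds$ and $\mathrm{II}_2(t,h) = \int_\mathcal{O}(\kappa + \abs{\nabla u(t)} + \abs{\tau_h\nabla u(t)})^{p-2}\abs{\tau_h\nabla\mathcal{I}(G(u))(t)}^2\dx$.

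The new key step is to raise this pointwise inequality to the power $q_2/2 \geq 1$ before integrating in $t$. For the deterministic term, Jensen's inequality applied to the averaged integral (using convexity of $x \mapsto x^{q_2/2}$) together with Fubini will produce
\begin{align*}
\sup_h h^{-q_2/2}\int_{I\cap I-\set{h}}\mathrm{I}(t,h)^{q_2/2}\dt \lesssim \norm{\Div S(\nabla u)}_{L^{q_2}_t L^2_x}^{q_2}.
\end{align*}
For the stochastic term I would bypass the $h$-tuned Young inequality of~\eqref{eq:est-2} and instead apply H\"older in space with exponents $p/(p-2)$ and $p/2$ directly (the case $p=2$ being trivial), extracting the $(t,h)$-independent random factor $C(\omega) := c(\kappa^{p-2} + \sup_{s\in I}\norm{\nabla u(s)}_{L^p_x}^{p-2})$:
\begin{align*}
\mathrm{II}_2(t,h) \leq C(\omega)\norm{\tau_h\nabla\mathcal{I}(G(u))(t)}_{L^p_x}^2.
\end{align*}
Raising to $q_2/2$, integrating in $t$, and taking the supremum in $h$ then gives
\begin{align*}
\sup_h h^{-q_2/2}\int_{I\cap I-\set{h}}\mathrm{II}_2(t,h)^{q_2/2}\dt \leq C(\omega)^{q_2/2}\seminorm{\mathcal{I}(G(u))}_{B^{1/2}_{q_2,\infty}(I;W^{1,p}_x)}^{q_2}.
\end{align*}

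To close the argument I would extract the $2/q_2$-th root using $(a+b)^{2/q_2} \leq a^{2/q_2} + b^{2/q_2}$ (valid since $q_2 \geq 2$), take expectation, and apply H\"older in $\omega$ with exponents $p/(p-2)$ and $p/2$ to the resulting product $C(\omega)\seminorm{\mathcal{I}(G(u))}_{B^{1/2}_{q_2,\infty}W^{1,p}_x}^2$. The factor $\mathbb{E}[C(\omega)^{p/(p-2)}]^{(p-2)/p}$ is controlled by $\norm{\nabla u}_{L^{q_1}_\omega L^\infty_t L^p_x}^{p-2} + 1$, while Corollary~\ref{cor:stoch-int-2} combined with the Besov-Orlicz embedding $B^{1/2}_{\Phi_2,\infty} \hookrightarrow B^{1/2}_{q_2,\infty}$ from Lemma~\ref{lem:exp-integrable} bounds $\mathbb{E}[\seminorm{\mathcal{I}(G(u))}_{B^{1/2}_{q_2,\infty}W^{1,p}_x}^p]^{2/p}$ by $(\norm{u}_{L^{N_p}_\omega L^\infty_t W^{1,p}_x} + 1)^2$. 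Since $q_1 > p$ guarantees that $t^{q_1}$ dominates $N_p(t) = t^p\log^{p/2}(1+t)$ near infinity, the Orlicz embedding $L^{q_1}_\omega \hookrightarrow L^{N_p}_\omega$ absorbs the last quantity into the hypothesis. The non-seminorm part $\norm{V(\nabla u)}_{L^{q_2}_t L^2_x}^2$ is trivially controlled via $\abs{V(\xi)}^2 \lesssim \abs{\xi}^p + \kappa^p$ and the integrability of $u$.

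The main obstacle will be precisely this last step: the stochastic term demands $1/2$-Besov regularity of $\mathcal{I}(G(u))$ in time with values in the gradient space $W^{1,p}_x$ at the higher integrability exponent $q_2$. Such regularity is not accessible from the polynomial Lemma~\ref{lem:time-reg-integral} without sacrificing probabilistic integrability, but is furnished exactly by the exponential refinement of Lemma~\ref{lem:time-reg-integral-2}; the strict inequality $q_1 > p$ is precisely what the Orlicz embedding $L^{q_1}_\omega \hookrightarrow L^{N_p}_\omega$ needs in order to invoke this refinement.
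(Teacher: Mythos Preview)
Your proof is correct and follows the paper's overall strategy, but you handle the stochastic term $\mathrm{II}_2$ differently. The paper retains the $h$-weighted Young inequality~\eqref{eq:est-2} from the proof of Theorem~\ref{thm:time-reg-sol}, which decouples the two factors already pointwise in $(\omega,t,x)$; after raising to the power $q_2/2$, integrating in $t$, taking $\sup_h$ and the $2/q_2$-th root, this produces the term $\mathbb{E}\bigl[\,\seminorm{\mathcal{I}(G(u))}_{B^{1/2}_{pq_2/2,\infty}W^{1,p}_x}^{p}\bigr]$, with the higher time-integrability index $pq_2/2$ but no H\"older step in probability. You instead apply H\"older in space to factor out the random constant $C(\omega)$ and then H\"older in $\omega$ with exponents $p/(p-2)$ and $p/2$; this keeps the time-integrability index at $q_2$ but introduces the product structure in probability. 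Both routes close via the same exponential Besov estimate (Corollary~\ref{cor:stoch-int-2} plus the embedding of Lemma~\ref{lem:exp-integrable}) and both need precisely the hypothesis $q_1>p$ for the Orlicz embedding $L^{q_1}_\omega\hookrightarrow L^{N_p}_\omega$. The paper's version is marginally more direct since it avoids the extra H\"older-in-$\omega$ splitting, while your version has the minor aesthetic advantage of demanding only $B^{1/2}_{q_2,\infty}$ regularity of the stochastic integral rather than $B^{1/2}_{pq_2/2,\infty}$; in practice this costs nothing either way because $L^{\Phi_2}$ embeds into every $L^r$.
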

\begin{proof}
The proof proceeds similar to the proof of Theorem \ref{thm:time-reg-sol}. Let $t,h \in I$ such that $t+h \in I$. Recalling \eqref{eq:I+II}, we arrive at
\begin{align*}
\norm{\tau_h V(\nabla u)}_{L^2_x}^{q_2} \eqsim \left( \mathrm{I} + \mathrm{II} \right)^{q_2/2} \lesssim \mathrm{I}^{q_2/2} + \mathrm{II}^{q_2/2}.
\end{align*}
Due to \eqref{eq:est-1},
\begin{align*}
\mathrm{I}^{q_2/2} \lesssim h^{q_2/2} \left( \norm{\tau_h (\Div S(\nabla u))(t)}_{L^2_x}^{q_2} +\dashint_{t}^{t+h} \norm{\Div S(\nabla u) }_{L^2_x}^{q_2} \ds  \right).
\end{align*}
The second term is estimated as in \eqref{eq:II} and \eqref{eq:est-2}
\begin{align*}
&\mathrm{II}^{q_2/2} \lesssim \delta \norm{\tau_h V(\nabla u)}_{L^2_x}^{q_2}  \\
&+ h^{q_2/2} \left( \frac{p-2}{p} \left( \sup_t \norm{\nabla u}_{L^p_x}^{pq_2/2} + 1\right) + \frac{2}{p} \left( h^{-1/2} \norm{\tau_h \mathcal{I}(G(u))(t)}_{W^{1,p}_x} \right)^{p q_2/2} \right).
\end{align*}
Choosing $\delta > 0 $ sufficiently small, we may absorb the first term to the left hand side. Multiplication by $h^{-q_2/2}$ yields
\begin{align*}
&h^{-q_2/2} \norm{\tau_h V(\nabla u)}_{L^2_x}^{q_2}  \lesssim \norm{\tau_h (\Div S(\nabla u))(t)}_{L^2_x}^{q_2} +\dashint_{t}^{t+h} \norm{\Div S(\nabla u) }_{L^2_x}^{q_2} \ds \\
&\quad + \sup_t \norm{\nabla u}_{L^p_x}^{pq_2/2} + 1 + \left( h^{-1/2} \norm{\tau_h \mathcal{I}(G(u))(t)}_{W^{1,p}_x} \right)^{p q_2/2}.
\end{align*}
Finally, integrate the parameter $t$ in time, take the $2/q_2$-th power, supremum over $h$ and expectation
\begin{align*}
\mathbb{E} \left[ \seminorm{V(\nabla u)}_{B^{1/2}_{q_2,\infty} L^2_x}^2 \right] &\lesssim \norm{\Div S(\nabla u)}_{L^2_\omega L^{q_2}_t L^2_x}^2 + \norm{\nabla u}_{L^p_\omega L^\infty_t L^p_x}^p \\
&\quad + \mathbb{E} \left[ \seminorm{\mathcal{I}(G(u))}_{B^{1/2}_{pq_2/2,\infty} W^{1,p}_x}^p \right] + 1.
\end{align*}
The seminorm estimate now follows by an application of \eqref{eq:est3}. H\"older's inequality establishes the remainding estimate
\begin{align*}
\norm{V(\nabla u)}_{L^2_\omega L^{q_2}_t L^2_x}^2 = \norm{\nabla u}_{L^p_\omega L^{p q_2/2}_t L^p_x}^p \lesssim \norm{\nabla u}_{L^p_\omega L^{\infty}_t L^p_x}^p.
\end{align*}
The assertion is proved.
\end{proof}

\begin{remark}
It is not know whether the assumption \eqref{ass:Div} can be verified under appropiate assumptions on the domain and the noise coefficient $G$ as done in Theorem \ref{thm:strong-sol} for the case $q_2 = 2$. 

An immediate consequence is the H\"older regularity of the map $t \mapsto V(\nabla u)(t)$ as an $L^2_x$-valued process, since
\begin{align*}
\norm{V(\nabla u)}_{L^2_\omega C^{1/2 - 1/q_2}_t L^2_x}^2 \lesssim \norm{V(\nabla u)}_{L^2_\omega B^{1/2}_{q_2,\infty} L^2_x}^2.
\end{align*}
\end{remark}

\bibliographystyle{amsalpha}
\bibliography{numerics}

\end{document}